\definecolor{red}{rgb}{1,0,0}
\definecolor{blue}{rgb}{.2,.2,.8}
\def\pre{\textrm{pre}}
\newtheorem{theorem}{Theorem}
\newtheorem{corollary}[theorem]{Corollary}
\newtheorem{proposition}{Proposition}
\newtheorem{conjecture}{Conjecture}
\newtheorem{lemma}[theorem]{Lemma}
\theoremstyle{definition}
\newtheorem{remark}{Remark}
\begin{document}

\allowdisplaybreaks

\title[Partitions and elementary symmetric polynomials]{Partitions and elementary symmetric polynomials - an experimental approach}
\author{Cristina Ballantine}\address{Department of Mathematics and Computer Science\\ College of the Holy Cross \\ Worcester, MA 01610, USA \\} 
\email{cballant@holycross.edu} 
\author{George Beck} \address{Department of Mathematics and Statistics\\ Dalhousie University\\ Halifax, NS, B3H 4R2, Canada \\} \email{george.beck@gmail.com} 
\author{Mircea Merca}\address{Department of Mathematical Methods and Models\\
 Fundamental Sciences Applied in Engineering Research Center\\  University Politehnica of Bucharest\\
 RO-060042 Bucharest, Romania\\}\email{mircea.merca@upb.ro}

\begin{abstract}
 Given a partition $\lambda$,  we write $e_j(\lambda)$ for the $j^{\textrm{th}}$ elementary symmetric polynomial $e_j$ evaluated at the parts of $\lambda$ and  $e_jp_A(n)$ for the sum of $e_j(\lambda)$ as $\lambda$ ranges over the set of partitions of $n$ with parts in $A$. For $e_jp_A(n)$, we prove analogs of the classical formula for the partition function, $p(n)=1/n \sum_{k=0}^{n-1}\sigma_1(n-k)p(k)$, where $\sigma_1$ is the sum of divisors function.  We  prove several congruences for $e_2p_4(n)$, the sum of $e_2$ over  the set of partitions of $n$ into four parts. Define the function $\pre_j(\lambda)$ to be the multiset of monomials in $e_j(\lambda)$, which is itself a partition. If $\mathcal A$ is a set of partitions, we define $\pre_j(\mathcal A)$ to be the set of partitions $\pre_j(\lambda)$ as $\lambda$ ranges over $\mathcal A$. If $\mathcal P(n)$ is the set of all partitions of $n$, we conjecture that  the number of odd partitions in  $\pre_2(\mathcal P(n))$ is at least the number of distinct partitions.  We prove some results about $\pre_2(\mathcal B(n))$, where $\mathcal B(n)$  is the set of binary partitions of $n$. We conclude with  conjectures on the log-concavity of functions related to $e_jp(n)$, the sum of $e_j(\lambda)$ for all $\lambda\in \mathcal P(n)$.
\end{abstract}

\maketitle
\section{Introduction}

An integer partition $\lambda$ of a positive integer $n$ is a weakly decreasing sequence of positive integers $\lambda_i$ whose sum is $n$. We use the notation $\lambda=(\lambda_1, \lambda_2, \ldots, \lambda_\ell)$ with $\lambda_1\geq  \lambda_2\geq \ldots \geq \lambda_\ell>0$ and $|\lambda|:=\sum_{i=1}^\ell\lambda_i=n$. We refer to $|\lambda|$ as the size of $\lambda$ and the numbers $\lambda_i$ as the parts of $\lambda$.
 The length of $\lambda$ is the number of parts of $\lambda$ and is denoted by $\ell(\lambda)$. We denote by $\mathcal P(n)$ the set of partitions of $n$ and let $p(n):=|\mathcal P(n)|$. Since the empty partition is the only partition of $0$, we have $p(0)=1$. We use the convention that $p(x)=0$ if $x\not\in \mathbb Z_{\geq 0}$ (and similarly for other partition statistics). For more on the theory of partitions, we refer the reader to \cite{Andrews98}.

Different classes of symmetric polynomials defined in terms of partitions form bases for the graded algebra of symmetric functions defined over the rational numbers. For details, we refer the reader to \cite{Stanley}. Recently, the third author has derived several partition identities as specializations of the fundamental
relations between complete and elementary symmetric functions (see \cite{M}).

In this article, we take a  different approach and  study properties of elementary symmetric polynomials evaluated at parts of partitions.  Recall that the $j^{\textrm{th}}$ elementary symmetric polynomial is defined by $$e_j(X_1, X_2, \ldots,X_n)=\begin{cases}\displaystyle\sum_{1\leq i_1<i_2<\cdots<i_j\leq n}X_{i_1}X_{i_2}\cdots X_{i_j} & \text{ if } j\leq n, \\ 0 & \text{ if } j> n.\end{cases} $$ Thus, given a partition $\lambda$, we have $e_j(\lambda)=0$ if $\ell(\lambda)<j$ and otherwise $$e_j(\lambda)=\sum_{1\leq i_{1}<i_{2}<\cdots< i_{j}\leq \ell(\lambda)}\lambda_{i_{1}}\lambda_{i_{2}}\cdots \lambda_{i_{j}}.$$

In \cite{SchSi}, the authors study the case $e_j(\lambda)$ where $\ell(\lambda)=j$ and call this statistic the norm of $\lambda$.

Given a set $A$ of positive integers, we denote by $\mathcal P(n|A)$ the set of partitions of $n$ with parts from $A$. We set $p(n|A):=|\mathcal P(n|A)|$.  We introduce the function  $$e_jp_A(n):=\sum_{\lambda\in \mathcal P(n|A)}e_j(\lambda)$$  and define the restricted  divisor function 
$$
\sigma_{j,A}(n) := \sum_{\substack{d|n\\d\in A}} d^j.
$$
Our first result establishes relations between the partition function $e_jp_A(n)$ and the divisor function $\sigma_{j,A}(n)$ when $j=2$ or $3$. 
\begin{theorem}\label{Th1}
For $n>0$,
\begin{enumerate}
    \item [(1)] $\displaystyle{e_2p_A(n) =\frac{1}{2} \left(n^2\,p(n|A)-\sum_{k=1}^n \sigma_{2,A}(k)\,p(n-k|A) \right) }$;
    \item [(2)] $\displaystyle{e_3p_A(n) =\frac{1}{6} \left(n^3\,p(n|A)-\sum_{k=1}^n \big( 3\,n\,\sigma_{2,A}(k)-2\,\sigma_{3,A}(k) \big)\,p(n-k|A) \right)}$.
\end{enumerate}
\end{theorem}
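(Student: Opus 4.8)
The plan is to work with the generating function $P_A(q) = \sum_{n\geq 0} p(n|A)\, q^n = \prod_{a\in A}(1-q^a)^{-1}$ and to extract $e_jp_A(n)$ by relating the elementary symmetric polynomials to power sums. For a single partition $\lambda$ with parts $\lambda_1,\dots,\lambda_\ell$, write $P_m(\lambda) = \sum_i \lambda_i^m$ for the $m$-th power sum. The Newton--Girard identities give $e_1 = P_1$, $2e_2 = P_1^2 - P_2$, and $6e_3 = P_1^3 - 3P_1P_2 + 2P_3$. Since every part of every $\lambda\in\mathcal P(n|A)$ satisfies $|\lambda| = P_1(\lambda) = n$, the factor $P_1(\lambda)$ is the constant $n$ on the whole summation range, so summing over $\lambda\in\mathcal P(n|A)$ turns these into
\[
2\,e_2p_A(n) = n^2 p(n|A) - \sum_{\lambda\in\mathcal P(n|A)} P_2(\lambda),\qquad 6\,e_3p_A(n) = n^3 p(n|A) - 3n\sum_{\lambda} P_2(\lambda) + 2\sum_{\lambda} P_3(\lambda).
\]
So everything reduces to evaluating $S_m(n) := \sum_{\lambda\in\mathcal P(n|A)} P_m(\lambda) = \sum_{\lambda\in\mathcal P(n|A)}\sum_i \lambda_i^m$.

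**Evaluating $S_m(n)$.** The key step is a standard ``mark one part'' argument: $S_m(n)$ counts, with weight $a^m$, each occurrence of a part equal to $a\in A$ across all partitions of $n$ with parts in $A$. Removing one copy of that marked part $a$ from $\lambda$ leaves a partition of $n-a$ with parts in $A$ (with no constraint, since repeats are allowed), and this correspondence is a bijection between (partition of $n$ in $\mathcal P(n|A)$, choice of one marked part $=a$) and (partition of $n-a$ in $\mathcal P(n-a|A)$). Summing over $a$,
\[
S_m(n) = \sum_{a\in A} a^m\, p(n-a|A).
\]
Now reindex by the total amount removed: writing $k = a\cdot(\text{copies removed})$ is not quite it---rather, group terms by collecting, for each $k\geq 1$, the contribution of all $a\in A$ with $a\mid k$... more carefully, one should note $\sum_{a\in A} a^m p(n-a|A)$ already has the shape $\sum_{k\geq 1}(\cdot)p(n-k|A)$ only if we iterate. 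The cleaner route: apply the identity once more inside, or better, recognize that the generating-function statement $\sum_n S_m(n) q^n = \big(\sum_{a\in A} a^m q^a/(1-q^a)\big)P_A(q)$ and $\sum_n \sigma_{m,A}(n)q^n = \sum_{a\in A} a^m q^a/(1-q^a)$ coincide, giving $S_m(n) = \sum_{k=1}^n \sigma_{m,A}(k)\,p(n-k|A)$ directly by comparing coefficients.

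**Assembling.** Substituting $S_2(n) = \sum_{k=1}^n \sigma_{2,A}(k)p(n-k|A)$ into the $e_2$ formula gives part (1) immediately. For part (2), substitute both $S_2$ and $S_3 = \sum_{k=1}^n \sigma_{3,A}(k)p(n-k|A)$ into the $e_3$ formula and combine the two sums into $\sum_{k=1}^n\big(3n\,\sigma_{2,A}(k) - 2\sigma_{3,A}(k)\big)p(n-k|A)$, which is exactly the claimed expression. I expect the main obstacle to be purely bookkeeping: making the ``mark a part / strip a part'' bijection airtight when parts repeat (the marked copy is distinguished by position, and stripping it is well-defined), and being careful that the reindexing producing $\sigma_{m,A}$ is the logarithmic-derivative identity $q\frac{d}{dq}\log\prod(1-q^a)^{-1}$ refined by $m$-th powers, rather than anything deeper. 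No genuinely hard analytic input is needed; the result is a weighted refinement of the classical recurrence $p(n) = \frac1n\sum_{k=1}^n\sigma_1(k)p(n-k)$, obtained by tracking $\sum_i\lambda_i^m$ instead of just $\ell(\lambda)$ or $n$.
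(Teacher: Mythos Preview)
Your proposal is correct and follows essentially the same route as the paper: reduce $e_2,e_3$ to power sums via Newton--Girard, use $P_1(\lambda)=n$, and evaluate $S_m(n)=\sum_\lambda P_m(\lambda)$ through the generating-function identity $\sum_n S_m(n)q^n=P_A(q)\sum_{a\in A}a^m q^a/(1-q^a)$ together with $\sum_n\sigma_{m,A}(n)q^n=\sum_{a\in A}a^m q^a/(1-q^a)$.

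One caveat worth fixing in a final write-up: the combinatorial claim in your middle paragraph---that removing a single marked copy of $a$ yields a \emph{bijection} between pairs (partition of $n$, marked occurrence of $a$) and $\mathcal P(n-a|A)$, hence $S_m(n)=\sum_{a\in A}a^m p(n-a|A)$---is false when $a$ repeats. For example with $A=\{2\}$ and $n=4$ there are two marked pairs but $p(2|A)=1$; in general the correct count is $\sum_\lambda m_\lambda(a)=\sum_{j\ge 1}p(n-ja|A)$, and it is precisely the geometric factor $q^a/(1-q^a)$ that encodes this. You evidently sense the mismatch and switch to the generating-function argument, which is exactly what the paper does, so the slip is harmless---just delete the bijection sentence.
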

Then we consider  $\mathcal P(m,n)$, the set of partitions of $n$ with length equal to  $m$ and set 
$p_m(n):=|\mathcal P(m,n)|$ and $$e_jp_m(n):=\sum_{\lambda\in \mathcal P(m,n)} e_j(\lambda).$$
In \cite{NW} Nijenhuis and Wilf establish the minimal periods for $p_m(n)$ modulo primes and in \cite{K} Kwong establishes the minimal periods for $p_m(n)$ modulo prime powers. Kronholm \cite{Kr} gives Ramanujan-like congruences and other general congruence properties for $p_m(n)$.

There is considerable numerical evidence to conjecture that the function $e_2p_4(n)$  satisfies the linear recurrence
\begin{align*}y(22+n) =& -y(n)-y(1+n)+3 y(3+n)+6 y(4+n)+3 y(5+n)-3 y(6+n)\\ & -12 y(7+n) -12 y(8+n)-2 y(9+n)+10 y(10+n)+18 y(11+n)\\ & +10 y(12+n)-2 y(13+n)-12 y(14+n)-12
y(15+n)-3 y(16+n)\\ & +3 y(17+n)+6 y(18+n)+3 y(19+n)-y(21+n)\end{align*} and therefore $e_2p_4(n)$ would have periods modulo $m$ for positive $m$ and sufficiently large $n$.

Below we give a conjectural table of minimal periods for certain values of $m$. 
$$\begin{array}{c|cccccccccccccc}m & 2& 3& 4& 5 & 6& 7& 8 & 9&10&11&12&13&14     \\ \hline \text{period} & 48 & 54& 96& 300 & 432& 84& 192& 324& 1150& 132& 864 & 156& 336
\end{array}$$
$$\begin{array}{c|ccccccccccccc}m &16 &  17& 18& 19& 21& 22& 23& 26& 27& 28& 29& 31& 37 \\ \hline \text{period} & 384& 204& 1246 & 228& 756 & 528 & 276 & 624 & 972 & 672 & 348 & 372 & 444
\end{array}$$

 We have used Mathematica\texttrademark \,  here and for many other calculations.

We prove the first three entries in the table above. 
\begin{theorem}\label{Th0}
  For $n\geq 0$,
  $$e_2p_4(n+48) \equiv e_2p_4(n) \pmod 2,$$
  $$e_2p_4(n+54) \equiv e_2p_4(n) \pmod 3,$$
  $$e_2p_4(n+96) \equiv e_2p_4(n) \pmod 4.$$
\end{theorem}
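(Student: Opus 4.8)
The plan is to reduce $e_2p_4(n)$ to the classical quasi-polynomials $p_3$ and $p_4$, derive an explicit closed form, and then extract the congruences. The starting point is the Newton-type identity $2e_2(\lambda)=e_1(\lambda)^2-\sum_i\lambda_i^2$, which for $\lambda\in\mathcal P(4,n)$ reads $2e_2(\lambda)=n^2-(\lambda_1^2+\lambda_2^2+\lambda_3^2+\lambda_4^2)$. Summing over $\mathcal P(4,n)$ and using the bijection sending a partition of $n$ into four parts together with a distinguished part of size $k$ to the pair $(k,\mu)$, where $\mu\in\mathcal P(3,n-k)$ is the partition obtained by deleting that part, one gets $\sum_{\lambda\in\mathcal P(4,n)}\sum_i\lambda_i^2=\sum_{k\ge1}k^2\,p_3(n-k)$, and hence
\[
e_2p_4(n)=\frac12\left(n^2\,p_4(n)-\sum_{k=1}^{n}k^2\,p_3(n-k)\right).
\]

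Next I would pass to generating functions. Using $\sum_{n}p_3(n)q^n=q^3/((1-q)(1-q^2)(1-q^3))$ and $\sum_{k\ge1}k^2q^k=q(1+q)/(1-q)^3$, the convolution term has generating function $q^4(1+q)/((1-q)^4(1-q^2)(1-q^3))=q^4/((1-q)^6(1+q+q^2))$, while $\sum_n n^2p_4(n)q^n=(q\,\tfrac{d}{dq})^2\,q^4/((1-q)(1-q^2)(1-q^3)(1-q^4))$. Combining and clearing denominators yields $\sum_n e_2p_4(n)q^n=B(q)/D(q)$, where one computes the reduced denominator over $\mathbb Q$ to be $D(q)=(1-q)^6(1+q)^4(1+q+q^2)^3(1+q^2)^3$ and $B(q)\in\mathbb Z[q]$ is an explicit polynomial (obtained with a CAS). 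Equivalently, $e_2p_4(n)$ is, for all $n\ge0$, given by an explicit quasi-polynomial of degree $5$ whose period divides $\operatorname{lcm}(1,2,3,4)=12$. Since $\deg D=22$, this at the same time establishes the conjectured order-$22$ recurrence.

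The three congruences then follow by working modulo $2$, $3$, $4$. For the prime moduli $m\in\{2,3\}$ I would reduce $D(q)$ modulo $m$ (using $1+q^2\equiv(1+q)^2\pmod2$ and $1+q+q^2\equiv(1-q)^2\pmod3$), cancel any factor shared with $B(q)$ modulo $m$, factor the result into powers of irreducible (cyclotomic) polynomials over $\mathbb F_m$, and find the least $P$ for which this reduced denominator divides $1-q^P$ in $\mathbb F_m[q]$; this $P$ is the least common multiple over the irreducible factors $\pi^{e}$ of $\operatorname{ord}(\pi)\cdot m^{\lceil\log_m e\rceil}$, where $\operatorname{ord}(\pi)$ is the least $N$ with $\pi\mid1-q^N$, and the computation gives $P=48$ for $m=2$ and $P=54$ for $m=3$. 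Consequently $(1-q^P)\sum_n e_2p_4(n)q^n$ is a polynomial modulo $m$, whence $e_2p_4(n+P)\equiv e_2p_4(n)\pmod m$ for all $n$ beyond a small explicit bound, the finitely many remaining cases being checked directly. For $m=4$, since $\mathbb Z/4\mathbb Z$ is not a field, I would instead use the quasi-polynomial: on each class $n\equiv r\pmod{12}$ write $e_2p_4(n)=\sum_{k=0}^{5}c_{r,k}\binom{(n-r)/12}{k}$ with $c_{r,k}\in\mathbb Z$; since $\binom{t+8}{k}\equiv\binom{t}{k}\pmod4$ for $k\le3$ and the explicit formula shows $c_{r,4},c_{r,5}$ are even, we get $e_2p_4(n+96)-e_2p_4(n)=\sum_k c_{r,k}\bigl(\binom{t+8}{k}-\binom{t}{k}\bigr)\equiv0\pmod4$, where $t=(n-r)/12$.

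The main obstacle is the second step: producing the reduced form $B(q)/D(q)$ (equivalently the twelve residue polynomials) in a way that can be certified. The naive common denominator coming from $(q\,\tfrac{d}{dq})^2$ is considerably larger than $D(q)$, and — more delicately — $B(q)$ turns out to share the factor $(1+q^2)^3$ with $D(q)$ only after reduction modulo $3$, not modulo $2$ or over $\mathbb Q$; this is precisely what allows $54$, which is not a multiple of $4$, to be a period modulo $3$. The modulus $m=4$ is the other subtle point, as the argument rests on the $2$-adic valuations of the two leading coefficients of each residue polynomial; without that input the method only gives period $192$ modulo $4$.
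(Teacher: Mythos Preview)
The first reduction step is wrong: the map sending $(\lambda,i)$ with $\lambda\in\mathcal P(4,n)$ and $1\le i\le 4$ to $(\lambda_i,\lambda\setminus\{\lambda_i\})$ is not a bijection onto pairs $(k,\mu)$ with $\mu\in\mathcal P(3,n-k)$. The fibre over $(k,\mu)$ has size $m_\mu(k)+1$, since that is the number of positions in $\mu\cup\{k\}$ equal to $k$. (The map \emph{would} be a bijection for compositions rather than partitions.) For $n=5$ the only $\lambda$ is $(2,1,1,1)$, so $\sum_i\lambda_i^2=7$, whereas $\sum_{k\ge1}k^2p_3(5-k)=p_3(4)+4\,p_3(3)=1+4=5$; for $n=6$ your formula gives $e_2p_4(6)=\tfrac12(36\cdot2-15)=57/2$, which is not even an integer. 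Consequently the rational function you go on to compute is not the generating function of $e_2p_4(n)$, and everything downstream --- the claimed denominator $D(q)$, the reductions mod $2,3,4$, and the $2$-adic analysis of the quasi-polynomial coefficients --- is built on the wrong object.

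The overall strategy is sensible and close in spirit to the paper's, but the paper avoids the need for an exact closed form: for each modulus $m\in\{2,3,4\}$ it determines $e_2(\lambda)\bmod m$ directly from the multiset of residues $\{\lambda_i\bmod m\}$, writes down the generating function of $e_2p_4(n)\bmod m$ as an explicit finite sum of rational functions (one per residue pattern), and then checks that $(1-q^P)$ times this is a polynomial modulo $m$. If you want to repair your approach, a correct rational generating function for $e_2p_4(n)$ can be obtained --- for instance via the substitution $b_j=\lambda_j-\lambda_{j+1}$ (with $\lambda_5=0$), which turns the sum over $\lambda\in\mathcal P(4,n)$ into an independent product and makes $\sum_i\lambda_i^2$ a quadratic in the $b_j$ that one can extract by differentiation --- but the ``remove a part and get $p_3$'' shortcut does not hold for partitions.
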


For comparison, from \cite[Theorem 2]{NW}, the minimal period for $p_4(n)$ modulo $2$ is $24$ and modulo $3$ it is $36$. From \cite[Theorem 14]{K}, the minimal period for $p_4(n)$ modulo $4$ is $48.$

 We conclude this section by describing some notation used throughout the paper.  We identify a partition with its multiset of parts and  write  $\lambda\cup \mu$ for the partition obtained by taking the multiset union of parts of $\lambda$ and $\mu$. Similarly, if   $\mu$ is a sub-multiset of $\lambda$, we write $\lambda\setminus \mu$ for the partition obtained by taking the multiset difference of parts of $\lambda$ and $\mu$. We denote by $m_\lambda(i)$ the number of times $i$ appears as a part in $\lambda$.  If $\lambda$ is a partition with parts divisible by $k$, we denote by $\mu/k$ the partition obtained by dividing each part of $\mu$ by $k$.

\section{Proof of Theorem \ref{Th1}}

First, we prove a useful lemma. 
Given a function $f$ defined on  subset $A$ of non-negative integers, let
     $$S_{A,f}(n):=\sum_{\lambda\in\mathcal P(n|A)} \sum_{i=1}^{\ell(\lambda)} f(\lambda_i).$$
   \begin{lemma} The generating function for $S_{A,f}(n)$ is given by 
   \begin{equation} \label{gf_S}
   \sum_{n=0}^\infty S_{A,f}(n)\,q^n= \prod_{a\in A} \frac{1}{1-q^a} \sum_{a\in A} \frac{f(a)\,q^a}{1-q^a}.
   \end{equation} 
   \end{lemma}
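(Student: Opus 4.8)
The plan is to reorganize the double sum defining $S_{A,f}(n)$ by grouping the contribution of each allowed part $a\in A$ according to how many times it occurs in $\lambda$. Since for every $\lambda\in\mathcal P(n|A)$ the sum over parts can be rebracketed by value as $\sum_{i=1}^{\ell(\lambda)} f(\lambda_i)=\sum_{a\in A} m_\lambda(a)\,f(a)$, we obtain
$$S_{A,f}(n)=\sum_{a\in A} f(a)\sum_{\lambda\in\mathcal P(n|A)} m_\lambda(a).$$
Multiplying by $q^n$, summing over $n\ge 0$, and interchanging the summations (legitimate since each coefficient is a finite sum — only finitely many partitions of a given $n$ exist) reduces the claim to computing, for each fixed $a\in A$, the series $\sum_{n\ge0}q^n\sum_{\lambda\in\mathcal P(n|A)} m_\lambda(a)$, i.e.\ the generating function for partitions with parts in $A$, each weighted by the multiplicity of the part $a$.

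For that weighted generating function I would start from the standard factorization $\sum_{\lambda\in\mathcal P(\cdot|A)}q^{|\lambda|}=\prod_{b\in A}(1-q^b)^{-1}$, in which the factor $(1-q^b)^{-1}=\sum_{k\ge0}q^{bk}$ records the number $k$ of copies of $b$ in $\lambda$. Inserting the weight $m_\lambda(a)=k$ affects only the factor indexed by $b=a$, replacing $\sum_{k\ge0}q^{ak}$ by $\sum_{k\ge0}k\,q^{ak}=q^a/(1-q^a)^2$. Hence
$$\sum_{n\ge0}q^n\sum_{\lambda\in\mathcal P(n|A)} m_\lambda(a)=\frac{q^a}{(1-q^a)^2}\prod_{\substack{b\in A\\ b\ne a}}\frac1{1-q^b}=\frac{q^a}{1-q^a}\prod_{b\in A}\frac1{1-q^b}.$$
Substituting this back into the expression for $\sum_n S_{A,f}(n)q^n$ and pulling the common product $\prod_{b\in A}(1-q^b)^{-1}$ out of the sum over $a\in A$ yields exactly \eqref{gf_S}.

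Every manipulation above is an identity of formal power series in $\mathbb Z[[q]]$ (and is equally valid for $|q|<1$), so I do not anticipate a real obstacle. The only points worth spelling out carefully are the rebracketing $\sum_i f(\lambda_i)=\sum_{a\in A}m_\lambda(a)f(a)$ and the elementary identity $\sum_{k\ge0}k\,q^{ak}=q^a/(1-q^a)^2$ (equivalently, applying $q^a\,\tfrac{d}{dq^a}$ to the $a$-factor); both are routine, and the interchange of the two summations needs only the observation that each $q^n$-coefficient involves finitely many terms.
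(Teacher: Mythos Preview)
Your argument is correct and is essentially the same as the paper's: the paper introduces a marking variable $z$ and computes $\partial_z\big|_{z=1}\prod_{a\in A}(1-z^{f(a)}q^a)^{-1}$, which by the product rule yields exactly your sum of single-factor modifications $\sum_{a}f(a)\,q^a/(1-q^a)^2\prod_{b\ne a}(1-q^b)^{-1}$. Your presentation simply unpacks this derivative combinatorially via the rewriting $\sum_i f(\lambda_i)=\sum_a m_\lambda(a)f(a)$, but the underlying computation is identical.
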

   \begin{proof} 
     \begin{align*}
        \sum_{n=0}^\infty S_{A,f}(n)\,q^n 
        &= \frac{\partial}{\partial z}  \Bigg|_{z=1}\prod_{a\in A} (1+z^{f(a)}\,q^a + z^{2f(a)}\,q^{a+a}+ z^{3f(a)}\,q^{a+a+a}+\cdots)  \\
        &= \frac{\partial}{\partial z} \prod_{a\in A} \frac{1}{1-z^{f(a)\,q^a}} \Bigg|_{z=1} 
        = \prod_{a\in A} \frac{1}{1-q^a} \sum_{a\in A} \frac{f(a)\,q^a}{1-q^a}.
     \end{align*}\end{proof}

\begin{proof}[Proof of Theorem \ref{Th1}]

As given in \cite{MS}, the generating function for  $\sigma_{j,A}(n)$ is
$$\sum_{n=1}^\infty \sigma_{j,A}(n)\,q^n = \sum_{a\in A} \frac{a^j\,q^a}{1-q^a}.$$

    For a partition $\lambda=(\lambda_1,\lambda_2,\ldots,\lambda_\ell)$ and a positive  integer $k$ we define the power sum $P_k(\lambda)$ as
    $$P_k(\lambda) := \sum_{i=1}^\ell \lambda_i^k.$$
    From the relations
    $$e_2(\lambda) = \frac{1}{2} \left( P_1(\lambda)^2 - P_2(\lambda) \right)$$
    and
    $$e_3(\lambda) = \frac{1}{6} \left( P_1(\lambda)^3 - 3\,P_1(\lambda)\,P_2(\lambda)+2\,P_3(\lambda) \right),$$
    we can write
    \begin{align*}
        e_2p_A(n) & = \sum_{\lambda\in\mathcal{P}(n|A) } e_2(\lambda)
        = \frac{1}{2} \sum_{\lambda\in\mathcal{P}(n|A)}  \left( P_1(\lambda)^2 - P_2(\lambda) \right)\\
        & = \frac{1}{2} \left(n^2\,p(n) - \sum_{\lambda\in\mathcal{P}(n|A)} P_2(\lambda) \right)
    \end{align*}   
    and
    \begin{align*}
        e_3p_A(n) & = \sum_{\lambda\in\mathcal{P}(n|A) } e_3(\lambda)
        = \frac{1}{6} \sum_{\lambda\in\mathcal{P}(n|A)}  \left( P_1(\lambda)^3 - 3\,P_1(\lambda)\,P_2(\lambda)+2\,P_3(\lambda) \right)\\
        & = \frac{1}{6} \left(n^3\,p(n|A) - 3\,n\sum_{\lambda\in\mathcal{P}(n|A)} P_2(\lambda) + 2 \sum_{\lambda\in\mathcal{P}(n|A)} P_3(\lambda) \right).
    \end{align*}

    From \eqref{gf_S} with  $f(a)=a^j$, we deduce 
    \begin{align*}
    \sum_{n=1}^\infty \sigma_{j,A}(n)\,q^n = \sum_{a\in A} \frac{a^j\,q^a}{1-q^a} 
    = \prod_{a\in A} (1-q^a) \sum_{n=0}^\infty \left( \sum_{\lambda\in\mathcal{P}(n|A)} P_j(\lambda) \right) q^n.
    \end{align*}
        Thus we can write
    \begin{align*}
    \sum_{n=0}^\infty \left( \sum_{\lambda\in\mathcal{P}(n|A)} P_j(\lambda) \right) q^n
    & = \prod_{a\in A} \frac{1}{1-q^a} \sum_{n=1}^\infty \sigma_{j,A}(n)\,q^n   \\
    & = \left( \sum_{n=0}^\infty p(n|A)\,q^n \right) \left( \sum_{n=1}^\infty \sigma_{j,A}(n)\,q^n \right).
    \end{align*}
    Multiplying the two power series and comparing coefficients, we obtain
    $$
    \sum_{\lambda\in\mathcal{P}(n|A)} P_j(\lambda) = \sum_{k=1}^n \sigma_{j,A}(k)\,p(n-k|A).
    $$
    This concludes the proof.
\end{proof}

\subsection{ Particular cases of Theorem \ref{Th1}}

First, let $A$ be the set of all positive integers.  We write $e_jp(n)$ for $e_jp_A(n)$.  We have the following consequence of Theorem \ref{Th1}. 
\begin{corollary}\label{C1}
For $n>0$,
\begin{enumerate}
    \item [(i)] $\displaystyle{e_2p(n)=\frac{1}{2} \left(n^2\,p(n)-\sum_{k=1}^n \sigma_2(k)\,p(n-k) \right)};$
    \item [(ii)] $\displaystyle{e_3p(n)=\frac{1}{6} \left(n^3\,p(n)-\sum_{k=1}^n \big( 3\,n\,\sigma_2(k)-2\,\sigma_3(k) \big)\,p(n-k) \right)}$.
\end{enumerate}
\end{corollary}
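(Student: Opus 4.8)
\textbf{Proof proposal for Corollary \ref{C1}.}
The plan is simply to specialize Theorem \ref{Th1} to the case $A=\mathbb Z_{>0}$, the set of all positive integers. First I would observe that with this choice of $A$ every partition of $n$ has all its parts in $A$, so $\mathcal P(n|A)=\mathcal P(n)$, and consequently $p(n|A)=p(n)$ and $e_jp_A(n)=e_jp(n)$ by the very definitions of these quantities.

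Next I would check that the restricted divisor function collapses to the ordinary one: since $d\mid n$ forces $d$ to be a positive integer, the condition $d\in A$ is automatic, so
$$\sigma_{j,A}(n)=\sum_{\substack{d\mid n\\ d\in A}} d^j=\sum_{d\mid n} d^j=\sigma_j(n).$$
In particular $\sigma_{2,A}(k)=\sigma_2(k)$ and $\sigma_{3,A}(k)=\sigma_3(k)$ for all $k$.

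Substituting these identifications into parts (1) and (2) of Theorem \ref{Th1} yields statements (i) and (ii) respectively, completing the proof. There is essentially no obstacle here; the only point requiring any verification at all is the elementary remark that the divisor-sum restricted to $A=\mathbb Z_{>0}$ is the full divisor-sum, and I would keep the write-up to a few lines accordingly.
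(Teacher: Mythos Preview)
Your proposal is correct and is exactly the paper's approach: the corollary is stated as an immediate specialization of Theorem \ref{Th1} with $A$ equal to the set of all positive integers, so that $p(n|A)=p(n)$, $e_jp_A(n)=e_jp(n)$, and $\sigma_{j,A}=\sigma_j$. No further argument is needed.
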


Next, let $A$ be the set of all odd positive integers.
We write $Q(n)$ for $p(n|A)$ and   $e_jQ(n)$ for $e_jp_A(n)$.   We have the following consequence of Theorem \ref{Th1},
where   $$\sigma_{j,\mathrm{odd}}(n) = \sum_{\substack{d|n\\ d \textrm{ odd}}} d^j.$$

\begin{corollary}\label{C2}
For $n>0$,
\begin{enumerate}
    \item [(i)] $\displaystyle{e_2Q(n)=\frac{1}{2} \left(n^2\,Q(n)-\sum_{k=1}^n \sigma_{2,\mathrm{odd}}(k)\,Q(n-k) \right)};$
    \item [(ii)] $\displaystyle{e_3Q(n)=\frac{1}{6} \left(n^3\,Q(n)-\sum_{k=1}^n \big( 3\,n\,\sigma_{2,\mathrm{odd}}(k)-2\,\sigma_{3,\mathrm{odd}}(k) \big)\,Q(n-k) \right)}$.
\end{enumerate}
\end{corollary}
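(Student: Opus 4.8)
The plan is to obtain Corollary \ref{C2} as a direct specialization of Theorem \ref{Th1}, taking $A$ to be the set of all odd positive integers. First I would record that, with this choice of $A$, the objects in the statement are literally the $A$-restricted objects of Theorem \ref{Th1}: by the definitions given in the introduction, $p(n|A)=Q(n)$ and $e_jp_A(n)=e_jQ(n)$ for every $j$. Second, I would check that the restricted divisor function specializes correctly: for $A$ the set of odd positive integers, a positive divisor $d$ of $n$ lies in $A$ precisely when $d$ is odd, so
$$
\sigma_{j,A}(n)=\sum_{\substack{d\mid n\\ d\in A}} d^j=\sum_{\substack{d\mid n\\ d\text{ odd}}} d^j=\sigma_{j,\mathrm{odd}}(n).
$$
Third, substituting these identifications into parts (1) and (2) of Theorem \ref{Th1} with $j=2$ and $j=3$ respectively produces exactly formulas (i) and (ii) of Corollary \ref{C2}.

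Since every step is a substitution, I expect no real obstacle here: the entire content is carried by Theorem \ref{Th1}, and the only thing that needs verifying is the (immediate) identification of $\sigma_{j,A}$ with $\sigma_{j,\mathrm{odd}}$ when $A$ is the set of odd positive integers. If one preferred a self-contained argument, one could instead mirror the proof of Theorem \ref{Th1} directly, using the generating function $\prod_{a\in A}(1-q^a)^{-1}=\sum_{n\ge 0}Q(n)\,q^n$ together with the Newton-type relations $e_2(\lambda)=\tfrac12(P_1(\lambda)^2-P_2(\lambda))$ and $e_3(\lambda)=\tfrac16(P_1(\lambda)^3-3P_1(\lambda)P_2(\lambda)+2P_3(\lambda))$ and Lemma~1 applied with $f(a)=a^j$; but the specialization route is shorter and I would present it that way.
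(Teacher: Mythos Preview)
Your proposal is correct and matches the paper's approach exactly: the paper states Corollary~\ref{C2} as an immediate consequence of Theorem~\ref{Th1} for $A$ the set of odd positive integers, with no separate argument given. The identifications $p(n|A)=Q(n)$, $e_jp_A(n)=e_jQ(n)$, and $\sigma_{j,A}=\sigma_{j,\mathrm{odd}}$ that you spell out are precisely what is being used.
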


Finally, let $A$ be the set of non-negative integer powers of $2$.  
Recall that a binary partition of $n$ is a partition with all parts powers of $2$. We write $B(n)$ for $p(n|A)$ and $e_jB(n)$ for $e_jp_A(n)$.  We have the following consequence of Theorem \ref{Th1},
where $$\sigma_{j,\mathrm{bin}}(n) = \sum_{2^d|n} 2^{d\, j}.$$ 

\begin{corollary}\label{C7}
For $n>0$,
\begin{enumerate}
    \item [(i)] $\displaystyle{e_2B(n)=\frac{1}{2} \left(n^2\,B(n)-\sum_{k=1}^n \sigma_{2,\mathrm{bin}}(k)\,B(n-k) \right)};$
    \item [(ii)] $\displaystyle{e_3B(n)=\frac{1}{6} \left(n^3\,B(n)-\sum_{k=1}^n \big( 3\,n\,\sigma_{2,\mathrm{bin}}(k)-2\,\sigma_{3,\mathrm{bin}}(k) \big)\,B(n-k) \right)}$.
\end{enumerate}
\end{corollary}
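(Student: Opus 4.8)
The plan is to obtain Corollary~\ref{C7} as an immediate specialization of Theorem~\ref{Th1}, taking $A=\{2^d:d\geq 0\}$ to be the set of non-negative integer powers of $2$. With this choice, $\mathcal P(n|A)$ is precisely the set of binary partitions of $n$, so by definition $p(n|A)=B(n)$ and $e_jp_A(n)=e_jB(n)$. Substituting into the two displayed formulas of Theorem~\ref{Th1} then yields the two claimed identities, provided we check that the restricted divisor function $\sigma_{j,A}(n)$ coincides with $\sigma_{j,\mathrm{bin}}(n)$.

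First I would verify this last point. By definition $\sigma_{j,A}(n)=\sum_{d\mid n,\,d\in A}d^j$, and the divisors of $n$ lying in $A$ are exactly the powers of $2$ dividing $n$, namely $2^0,2^1,\dots,2^{v_2(n)}$ where $v_2$ denotes the $2$-adic valuation. Hence $\sigma_{j,A}(n)=\sum_{2^d\mid n}(2^d)^j=\sum_{2^d\mid n}2^{dj}=\sigma_{j,\mathrm{bin}}(n)$, which matches the definition stated just before the corollary. After this identification, parts (i) and (ii) are literally parts (1) and (2) of Theorem~\ref{Th1} with $j=2$ and $j=3$ respectively.

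There is no genuine obstacle here; the only point that warrants a moment's care is that $A$ is now an infinite set, so one should confirm that the generating-function input borrowed from \cite{MS}, namely $\sum_{n\geq1}\sigma_{j,A}(n)q^n=\sum_{a\in A}a^jq^a/(1-q^a)$, and the manipulations in the Lemma and in the proof of Theorem~\ref{Th1} remain valid. This is the case: each coefficient $B(n)$ is finite, so $\prod_{a\in A}1/(1-q^a)$ is a well-defined element of $\mathbb Z[[q]]$, and all the steps in question are formal power series identities that hold coefficientwise regardless of whether $A$ is finite or infinite. Thus the proof reduces to a one-line invocation of Theorem~\ref{Th1} together with the divisor-function identification above.
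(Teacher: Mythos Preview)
Your proposal is correct and matches the paper's approach exactly: the paper presents Corollary~\ref{C7} as an immediate specialization of Theorem~\ref{Th1} with $A$ the set of powers of $2$, and you supply precisely that specialization together with the (routine) identification $\sigma_{j,A}=\sigma_{j,\mathrm{bin}}$. Your extra remark about the validity of the generating-function manipulations for infinite $A$ is a reasonable sanity check but not something the paper treats as requiring separate justification.
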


Next, we determine the parity of $e_jB(n)$.
\begin{theorem}\label{Th3} Let $n,j \geq 2$ be integers. Then, $$e_jB(n)\equiv \binom{n-2}{j-2}\pmod 2.$$
\end{theorem}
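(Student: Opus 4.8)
The plan is to introduce an auxiliary variable $z$ recording the index $j$ and to work with the bivariate generating function $\sum_{j\ge0}\sum_{n\ge0} e_jB(n)\,z^j q^n$. Since $e_j(\lambda)$ is exactly the coefficient of $z^j$ in $\prod_{i=1}^{\ell(\lambda)}(1+z\lambda_i)=\sum_{j\ge0}e_j(\lambda)\,z^j$, summing over binary partitions and organizing each partition by the multiplicity $m_d$ of the part $2^d$ gives $\prod_{i}(1+z\lambda_i)=\prod_{d\ge0}(1+2^d z)^{m_d}$ and $|\lambda|=\sum_d 2^d m_d$; summing a geometric series in each $m_d$ yields
\begin{equation*}
\sum_{j\ge0}\sum_{n\ge0} e_jB(n)\,z^j q^n \;=\; \prod_{d\ge 0}\frac{1}{1-(1+2^d z)\,q^{2^d}},
\end{equation*}
an identity in $\mathbb{Z}[z][[q]]$; the product converges because the factor indexed by $d$ is $1+O(q^{2^d})$.

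Next I would reduce this identity modulo $2$. For $d\ge 1$ we have $1+2^d z\equiv 1\pmod 2$, so those factors collapse to $\prod_{d\ge 1}\frac{1}{1-q^{2^d}}$, while the $d=0$ factor becomes $\frac{1}{1-(1+z)q}\equiv\sum_{k\ge0}(1+z)^kq^k\pmod 2$. The product over $d\ge1$ can be evaluated in closed form: from the classical binary identity $\prod_{d\ge0}(1+q^{2^d})=\frac{1}{1-q}$ one gets $\prod_{d\ge1}(1+q^{2^d})=\frac{1}{1-q^2}$, so $\prod_{d\ge1}\frac{1}{1-q^{2^d}}\equiv\prod_{d\ge1}\frac{1}{1+q^{2^d}}=1-q^2\equiv 1+q^2\pmod 2$. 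Combining the two parts,
\begin{equation*}
\sum_{j\ge0}\sum_{n\ge0} e_jB(n)\,z^j q^n\;\equiv\;(1+q^2)\sum_{k\ge0}(1+z)^k q^k\pmod 2.
\end{equation*}

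Finally I would extract coefficients. For $n\ge2$ the coefficient of $q^n$ on the right is $(1+z)^n+(1+z)^{n-2}=(1+z)^{n-2}\big((1+z)^2+1\big)\equiv z^2(1+z)^{n-2}\pmod 2$, using $(1+z)^2\equiv 1+z^2$. Reading off the coefficient of $z^j$ gives $e_jB(n)\equiv [z^{j-2}](1+z)^{n-2}=\binom{n-2}{j-2}\pmod 2$ for all $n\ge2$ and $j\ge2$, which is the assertion.

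There is no serious obstacle here; the only points needing care are the bookkeeping of formal power series — checking the infinite products converge in $\mathbb{Z}[z][[q]]$ and that reduction mod $2$ commutes with them — and the telescoping evaluation of $\prod_{d\ge1}(1-q^{2^d})^{-1}$ modulo $2$. Once those are in place the argument is a short computation. (One could instead avoid $z$ and derive a direct recurrence for $e_jB(n)$ modulo $2$ in the spirit of Corollary \ref{C7}, but the generating-function route above is cleaner.)
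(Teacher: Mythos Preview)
Your proof is correct, but the paper takes a different, more combinatorial route. There it is observed directly that for a binary partition $\lambda$ the only monomials in $e_j(\lambda)$ that can be odd are products of $j$ parts all equal to $1$, so $e_j(\lambda)\equiv\binom{m_\lambda(1)}{j}\pmod 2$; summing over $\lambda$ with $m_\lambda(1)=k$ (there are $B((n-k)/2)$ of these) and using $B(m)\equiv 0\pmod 2$ for $m\ge 2$ collapses the sum to $\binom{n}{j}+\binom{n-2}{j}$, which two applications of Pascal's rule turn into $\binom{n-2}{j-2}$. Your bivariate generating function encodes exactly the same underlying fact---that the factors with $d\ge1$ lose all $z$-dependence modulo $2$---but replaces the case analysis and the Pascal manipulation with the single factorization $(1+z)^2+1\equiv z^2\pmod 2$. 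The paper's approach is slightly more elementary (no auxiliary variable, no formal-series convergence to justify); yours is more systematic and would generalize more cleanly, for instance to other part sets or to higher prime moduli.
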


\begin{proof}

  Let $\lambda\in \mathcal B(n)$. 
  
    If $m_\lambda(1)<j$, then $e_j(\lambda)\equiv 0 \pmod 2$ since all summands in $e_j(\lambda)$ are even. 

    If $m_\lambda(1)\geq j$, then $$e_j(\lambda)\equiv \binom{m_\lambda(1)}{j}\pmod 2.$$

    Next, notice that $m_\lambda(1)=k$ if and only if $(\lambda\setminus(1^k))/2\in \mathcal B\left(\frac{n-k}{2}\right).$  Hence, $$e_jB(n)\equiv \sum_{k=j}^n \binom{k}{j}B\left(\frac{n-k}{2}\right)\pmod 2.$$

    It can be shown by induction that $B(n)\equiv 0\pmod 2$ for $n\geq 2$. Moreover $B(0)=B(1)=1$. Thus, 
    $$e_jB(n)\equiv  \binom{n}{j} + \binom{n-2}{j}\pmod 2.$$ Using Pascal's formula,  
    \begin{align*}
        \binom{n}{j} + \binom{n-2}{j}& = \binom{n-1}{j-1} +\binom{n-1}{j}+ \binom{n-2}{j}\\ & =\binom{n-1}{j-1} +\binom{n-2}{j-1}+ \binom{n-2}{j}+ \binom{n-2}{j} \\ & \equiv \binom{n-2}{j-2}\pmod 2. 
    \end{align*}

For the last equivalence we used Pascal's formula again: $$\binom{n-1}{j-1} -\binom{n-2}{j-1}=\binom{n-2}{j-2}.$$
\end{proof}
We single out the following two cases of Theorem \ref{Th3}.

\begin{corollary}\label{C9}
    For $n\geq 2$,
    $$ e_2B(n) \equiv 1 \pmod 2.$$
\end{corollary}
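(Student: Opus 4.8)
The plan is to derive Corollary~\ref{C9} as the immediate specialization $j=2$ of Theorem~\ref{Th3}. Setting $j=2$ in the congruence $e_jB(n)\equiv\binom{n-2}{j-2}\pmod 2$ gives $e_2B(n)\equiv\binom{n-2}{0}\pmod 2$, and since $\binom{m}{0}=1$ for every $m\geq 0$, we conclude $e_2B(n)\equiv 1\pmod 2$ for all $n\geq 2$. There is essentially nothing else to prove, so the body of the proof is a single line invoking Theorem~\ref{Th3}.

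If instead one wanted a self-contained argument, I would retrace the proof of Theorem~\ref{Th3} in the case $j=2$: a binary partition $\lambda$ of $n$ contributes an odd value of $e_2(\lambda)$ only when $m_\lambda(1)\geq 2$ and $\binom{m_\lambda(1)}{2}$ is odd, and after the substitution $k=m_\lambda(1)$ one obtains $e_2B(n)\equiv\sum_{k\geq 2}\binom{k}{2}B\big(\tfrac{n-k}{2}\big)\pmod 2$. Using $B(0)=B(1)=1$ and $B(m)\equiv 0\pmod 2$ for $m\geq 2$, only the terms with $\tfrac{n-k}{2}\in\{0,1\}$ survive, i.e.\ $k=n$ and $k=n-2$, giving $e_2B(n)\equiv\binom{n}{2}+\binom{n-2}{2}\pmod 2$; a short Pascal's-rule computation reduces this to $1$.

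There is no real obstacle here: the only point requiring the hypothesis $n\geq 2$ is that we need a valid binary partition of $\tfrac{n-k}{2}$ with $k=n-2$ (forcing $n\geq 2$ so that $n-2\geq 0$), exactly matching the range of validity of Theorem~\ref{Th3}. I would simply write the one-line deduction.

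\begin{proof}
This is the case $j=2$ of Theorem~\ref{Th3}, since $\binom{n-2}{0}=1$.
\end{proof}
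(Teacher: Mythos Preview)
Your proof is correct and matches the paper's approach exactly: the paper presents Corollary~\ref{C9} as the $j=2$ case of Theorem~\ref{Th3} without further argument, which is precisely what you do.
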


\begin{corollary}
    For $n\geq 2$,
    $$ e_3B(n) \equiv n \pmod 2.$$
\end{corollary}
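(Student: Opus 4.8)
The plan is simply to read off the case $j = 3$ of Theorem \ref{Th3}. First I would check that the hypotheses are met: Theorem \ref{Th3} requires $n, j \geq 2$, and here $j = 3 \geq 2$ while the corollary already assumes $n \geq 2$. Then Theorem \ref{Th3} gives
\[
e_3B(n) \equiv \binom{n-2}{3-2} = \binom{n-2}{1} = n - 2 \pmod 2,
\]
and I would finish by noting that $n - 2 \equiv n \pmod 2$, which is the stated congruence.

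If a self-contained argument were wanted instead of invoking the general formula, one could retrace the proof of Theorem \ref{Th3} with $j = 3$: reduce $e_3B(n)$ modulo $2$ to $\sum_{k \geq 3}\binom{k}{3}\,B\big(\tfrac{n-k}{2}\big)$, use $B(m) \equiv 0 \pmod 2$ for $m \geq 2$ together with $B(0) = B(1) = 1$ to collapse the sum to $\binom{n}{3} + \binom{n-2}{3}$, and simplify this modulo $2$ via Pascal's rule. But since Theorem \ref{Th3} is already established, the one-line specialization above is enough.

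There is no real obstacle here: the corollary is an immediate arithmetic consequence of a theorem proved earlier in the paper, and the only ``step'' is the trivial identity $\binom{n-2}{1} = n - 2 \equiv n \pmod 2$.
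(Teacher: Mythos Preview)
Your argument is correct and is exactly what the paper does: the corollary is stated immediately after Theorem~\ref{Th3} as a direct specialization with $j=3$, with no separate proof given. The one-line computation $\binom{n-2}{1}=n-2\equiv n\pmod 2$ is all that is needed.
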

Since $\sigma_{2,\textrm{bin}}(k)\equiv 1\pmod 2$ if $k\geq 1$ and $B(0)=B(1)=1$, from Corollary \ref{C9} and  Corollary \ref{C7}(i), we see that,  for  $n\geq 3$,   $$n^2\,B(n)-\sum_{k=1}^{n-2} B(n-k)\equiv 0\pmod 4.$$ \begin{corollary} Let $n\geq 3$. If $n$ is even $$\sum_{k=1}^{n-2}B(n-k)\equiv 0\pmod 4,$$ and  if $n$ is odd $$B(n)-\sum_{k=1}^{n-2}B(n-k)\equiv 0\pmod 4.$$\end{corollary}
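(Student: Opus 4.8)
The plan is to deduce everything from the congruence displayed immediately before the statement, namely that $n^2\,B(n)-\sum_{k=1}^{n-2}B(n-k)\equiv 0\pmod 4$ for all $n\geq 3$, and then to split into cases according to the parity of $n$; the only point being used is that the parity of $n$ already determines $n^2$ modulo $4$.

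First, if $n$ is even, write $n=2m$, so that $n^2=4m^2\equiv 0\pmod 4$ and hence $n^2\,B(n)\equiv 0\pmod 4$ regardless of the value of $B(n)$. The displayed congruence then reduces to $\sum_{k=1}^{n-2}B(n-k)\equiv 0\pmod 4$, which is the first assertion. Next, if $n$ is odd, then $n^2\equiv 1\pmod 4$ (indeed $n^2\equiv 1\pmod 8$), so $n^2\,B(n)\equiv B(n)\pmod 4$, and the same congruence becomes $B(n)-\sum_{k=1}^{n-2}B(n-k)\equiv 0\pmod 4$, the second assertion. This exhausts the cases.

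I do not expect any genuine obstacle here: all the substance is already contained in the quoted congruence, which itself rests on Corollary~\ref{C9} (that $e_2B(n)$ is odd for $n\geq 2$), Corollary~\ref{C7}(i), the values $B(0)=B(1)=1$, and the elementary fact that $\sigma_{2,\mathrm{bin}}(k)\equiv 1\pmod 2$ for $k\geq 1$. If anything deserves a second look, it is only the bookkeeping by which the full sum $\sum_{k=1}^{n}\sigma_{2,\mathrm{bin}}(k)B(n-k)$ gets replaced by the truncated sum $\sum_{k=1}^{n-2}B(n-k)$: the two dropped terms are $\sigma_{2,\mathrm{bin}}(n-1)B(1)$ and $\sigma_{2,\mathrm{bin}}(n)B(0)$, each congruent to $1$ modulo $2$, so together they contribute the offset $2$ modulo $4$ that shapes the displayed congruence; once that offset is absorbed, the parity case-split above finishes the proof in a line or two.
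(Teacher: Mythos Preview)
Your proof is correct and is exactly the argument the paper intends: the corollary is stated immediately after the displayed congruence $n^2\,B(n)-\sum_{k=1}^{n-2}B(n-k)\equiv 0\pmod 4$ with no further proof, and the reader is expected to do precisely the parity split you give.

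One small imprecision in your aside about the bookkeeping: saying the two dropped terms are ``each congruent to $1$ modulo $2$'' is not by itself enough to conclude they contribute $2$ modulo $4$ (two odd numbers can sum to $0\pmod 4$). What actually makes it work is that $\sigma_{2,\mathrm{bin}}(k)=1+4+16+\cdots\equiv 1\pmod 4$ for every $k\ge 1$, so the two boundary terms are each $\equiv 1\pmod 4$; and for the interior terms $1\le k\le n-2$ one uses that $B(n-k)$ is even (for $n-k\ge 2$) together with $\sigma_{2,\mathrm{bin}}(k)$ odd to get $\sigma_{2,\mathrm{bin}}(k)B(n-k)\equiv B(n-k)\pmod 4$. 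But this is only a remark on the preceding displayed line, not on your deduction of the corollary, which is fine as written.
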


\section{Proof of Theorem \ref{Th0}}

\noindent \textit{Proof of ${e_2p_4(n)\equiv e_2p_4(n+48)\pmod 2}$.}

For two formal power series, we write $$\sum_{n=0}^\infty a(n)q^n \equiv \sum_{n=0}^\infty b(n)q^n \pmod m$$ to mean $a(n)\equiv b(n)\pmod m$ for all $n\geq 0$. 
 
  By the definition of $e_2(\lambda)$, if $\lambda\in \mathcal P(4,n)$, then  $e_2(\lambda)$ is odd if and only if $\lambda$ has one or two even parts.
  We first establish the generating function for the number of partitions of $n$ into $4$ parts with exactly two even parts.

Recall that the conjugate of the partition $\lambda=(\lambda_1, \lambda_2, \ldots, \lambda_\ell)$ is the partition $\lambda'$ with $\lambda'_j=|\{i\mid \lambda_i\geq j\}|$. Considering conjugates of partitions, the generating function for the number of partitions of $n$ into two  even parts is given by  $$\frac{q^4}{1-q^4}\frac{1}{1-q^2}$$ and the generating function for the number of partitions of $n$ into two odd parts is given by $$\frac{q^2}{1-q^4}\frac{1}{1-q^2}.$$
 
 Thus, the generating function for the number of partitions of $n$ into $4$ parts with exactly two even parts is \begin{align*} \frac{q^6}{(1-q^2)^2(1-q^4)^2}. \end{align*}

Similarly,  the generating function for the number of partitions of $n$ into four parts with exactly one part even is given  by $$\frac{q^2}{1-q^2}\cdot\frac{q^3}{(1-q^2)(1-q^4)(1-q^6)}=\frac{q^5}{(1-q^2)^2(1-q^4)(1-q^6)}.$$

Thus, \begin{align*}\sum_{n=0}^\infty e_2p_4(n)q^n& \equiv\frac{q^6}{(1-q^2)^2(1-q^4)^2}+\frac{q^5}{(1-q^2)^2(1-q^4)(1-q^6)} \pmod 2\\ & \equiv 
\left(\frac{q^6}{1-q^4}+\frac{q^5}{1-q^6}\right)\frac{1}{(1-q^2)^4} \pmod 2\\ & \equiv 
\left(\frac{q^6}{1-q^4}+\frac{q^5}{1-q^6}\right)\frac{1}{1-q^8} \pmod 2.\end{align*}

 Then \begin{align*}\notag \sum_{n=0}^\infty (e_2p_4(n)-e_2p_4(n-48))q^n &  \equiv  \left(\frac{q^6}{1-q^4}+\frac{q^5}{1-q^6}\right)\frac{1-q^{48}}{1-q^8}\pmod 2 \\ &  \equiv \left(\frac{q^6}{1-q^4}+\frac{q^5}{1-q^6}\right)A(q) \pmod 2, 
\end{align*} 
where $A(q):=1+q^8+q^{16}+q^{24}+q^{32}+q^{40}$.

Writing  $A(q)=(1+q^{8})(1+q^{16}+q^{32})$, we have 
\begin{align*}\frac{q^6}{1-q^4}A(q)& \equiv \frac{q^6}{1-q^4}(1-q^{8})(1+q^{16}+q^{32})\pmod 2\\ & = q^6(1+q^4)(1+q^{16}+q^{32}).\end{align*}

Writing  $A(q)=(1+q^{24})(1+q^{8}+q^{16})$, we have  
\begin{align*}\frac{q^5}{1-q^6}A(q)& \equiv \frac{q^5}{1-q^6}(1-q^{24})(1+q^{8}+q^{16})\pmod 2\\ & = q^5(1+q^{12})(1+q^6)(1+q^{8}+q^{16}). \end{align*} Hence, for $n\geq 48$, $e_2p_4(n)-e_2p_4(n-48)\equiv 0 \pmod 2$.

\medskip

\noindent \textit{Proof of ${e_2p_4(n)\equiv e_2p_4(n+54)\pmod 3}$.}

By listing all combinations of residues modulo $3$ for parts of $\lambda\in \mathcal P(4,n)$, we see that 
\begin{itemize}

\item[(i)] $e_2(\lambda)\equiv 1\pmod 3$ if and only if there are two different residues modulo $3$ among the parts of $\lambda$ and there are two parts congruent to each such residue; 

\item[(ii)] $e_2(\lambda)\equiv 2\pmod 3$ if and only if there are three different residues modulo $3$ among the parts of $\lambda$ (thus, there are two parts congruent to each other modulo $3$ and the other two parts have residues different from each other and from the pair).

\item[(iii)]  $e_2(\lambda)\equiv 0\pmod 3$ in all other cases.
\end{itemize}

The generating function for the number of partitions of $n$ in case (i) is given by \begin{align*}\frac{q^6}{1-q^6}\frac{1}{1-q^3}\frac{q^2}{1-q^6}\frac{1}{1-q^3} & +\frac{q^6}{1-q^6}\frac{1}{1-q^3}\frac{q^4}{1-q^6}\frac{1}{1-q^3}\\ & +\frac{q^2}{1-q^6}\frac{1}{1-q^3}\frac{q^4}{1-q^6}\frac{1}{1-q^3}\\ & = \frac{q^{10}+q^8+q^6}{(1-q^3)^2(1-q^6)^2}.  
\end{align*}

The generating function for the number of partitions of $n$ in case (ii) is given by \begin{align*}\frac{q^6}{(1-q^6)(1-q^3)}\frac{q}{1-q^3}\frac{q^2}{1-q^3}& +\frac{q^2}{(1-q^6)(1-q^3)}\frac{q^3}{1-q^3}\frac{q^2}{1-q^3}\\ & +\frac{q^4}{(1-q^6)(1-q^3)}\frac{q^3}{1-q^3}\frac{q}{1-q^3} \\&  =  \frac{q^9+q^8+q^7}{(1-q^6)(1-q^3)^3}.\end{align*}

Thus,\begin{align*}
    \sum_{n=0}^\infty  e_2p_4(n)q^n & \equiv   \frac{q^{10}+q^8+q^6}{(1-q^3)^2(1-q^6)^2}+ \frac{2(q^9+q^8+q^7)}{(1-q^6)(1-q^3)^3}\pmod 3\\ & = \frac{q^6 + 2 q^7 + 3 q^8 + q^9 + q^{10} - q^{11} - 3 q^{13} - 2 q^{14} - 2 q^{15}}{(1-q^6)^2(1-q^3)^3}
    \\ & \equiv\frac{D(q)}{(1-q^3)^3(1-q^6)^2}\pmod 3,
\end{align*}
where $D(q)=q^6 + 2 q^7 +  q^9 + q^{10} +2 q^{11} + q^{14} + q^{15}$.

Then \begin{align*}\notag \sum_{n=0}^\infty (e_2p_4(n)-e_2p_4(n-54))q^n  & \equiv D(q) \frac{1-q^{54}}{(1-q^3)^3(1-q^6)^2}\pmod 3\\ & \equiv D(q) \frac{(1-q^{3})^9(1+q^{3})^9}{(1-q^3)^3(1-q^6)^2}\pmod 3\\&  \equiv D(q)\frac{(1-q^{3})^6(1+q^{3})^9}{(1-q^6)^2}\pmod 3\\&  \equiv D(q)(1-q^{3})^4(1+q^{3})^7 \pmod 3.\end{align*} Hence, for $n\geq 54$, $e_2p_4(n)-e_2p_4(n-54)\equiv 0 \pmod 3$.
\bigskip

\noindent \textit{Proof of ${e_2p_4(n)\equiv e_2p_4(n+96)\pmod 4}$.}

Let $\rho(\lambda) = \{r_1, r_2, r_3, r_4\}$, the multiset of residues modulo $4$ of the parts of $\lambda$. Here $0\leq r_i\leq 3$, $i=1,2,3,4$, and the order of the residues does not matter. For example, if $\rho(\lambda) = \{3,3,0,2\}$, then $\lambda$ has two parts congruent to $3$ modulo $4$, and one part each congruent to $0$ and to $2$ modulo $4$.

By examining all combinations of residues modulo $4$ for parts of $\lambda\in \mathcal P(4,n)$, we see that 
\begin{itemize}
\item[(i)] $e_2(\lambda)\equiv 1\pmod 4$ if and only if  $\rho(\lambda)$ is one of 
\begin{align*} & \{1,1,1,2\},\,  \{0,0,1,1\}, \,  \{1,1,2,2\}, \,  \{1,1,0,2\}, \, \{1,1,2,3\}, \,   \\  &
\{3,3,3,2\},\, \{0,0,3,3\}, \, \{3,3,2,2\}, \,  \{3,3,0,2\}, \, \{3,3,2,1\}
\end{align*}

\item[(ii)] $e_2(\lambda)\equiv 2\pmod 4$ if and only if $\rho(\lambda)$ is one of \begin{align*}&  \{1,1,1,1\}, \,  \{1,2,2,2\}, \,   \{0,0,1,2\}, \,    \{1,1,3,3\}\\ &  \{3,3,3,3\}, \,  \{3,2,2,2\}, \,  \{0,0,3,2\}\, 
\end{align*}

\item[(iii)] $e_2(\lambda)\equiv 3\pmod 4$ if and only if $\rho(\lambda)$ is one of \begin{align*} & \{0,1,1,1\}, \,  \{0,1,1,3\}, \,  \{0,0,1,3\}, \,   \{0,1,2,3\},\\ &   \{0,3,3,3\},  \,  \{0,3,3,1\},  \, \{2,2,1,3\}, \,  
\end{align*}
\item[(iv)] $e_2(\lambda)\equiv 0\pmod 4$ in all other cases. 
\end{itemize}

Next, we find the generating function for $e_2p_4(n)$ modulo $4$.  Consider for example the case $\lambda$ with $\rho(\lambda)=\{1,1,0,2\}$. 
We can write $\lambda=\mu\cup\eta$, 
where $\mu$ and $\eta$ are each partitions with two parts, the parts of $\mu$ are both congruent to $1\pmod 4$, one part of $\eta$ is congruent to $0$ (so, at least $4)$ and the other is congruent to $2\pmod 4$. Then the conjugate  of $\mu$ is $\mu'=(2^a, 1^b)$, with $a\equiv 1\pmod 4$ and $b\equiv 0\pmod 4$. The generating function for partitions $\lambda$ of $n$ with $\rho(\lambda)=\{1,1,0,2\}$ is given by $$\frac{q^2}{1-q^8}\frac{1}{1-q^4}\frac{q^4}{1-q^4}\frac{q^2}{1-q^4}.$$ 

Similarly, the generating function for partitions $\lambda$ of $n$ with $\rho(\lambda)=\{3,3,0,2\}$ is given by $$\frac{q^6}{1-q^8}\frac{1}{1-q^4}\frac{q^4}{1-q^4}\frac{q^2}{1-q^4}.$$ Together, we obtain  $$\frac{q^8}{(1-q^8)(1-q^4)^3}+ \frac{q^{12}}{(1-q^8)(1-q^4)^3}=\frac{q^8(1+q^4)}{(1-q^8)(1-q^4)^3}.$$ 
\medskip

Continuing with the rest of the residue multisets in cases i, ii, and iii and combining the pairs in the columns above,  we have
\begin{align*}\sum_{n=0}^\infty e_2p_4(n)q^n  \equiv & 
    \frac{q^5(1+q^6)}{(1-q^{12})(1-q^8)(1-q^4)^2} + \frac{q^{10}(1+q^4)}{(1-q^8)^2(1-q^4)^2} \\ &  +\frac{q^{6}(1+q^4)}{(1-q^8)^2(1-q^4)^2}  + \frac{q^8(1+q^4)}{(1-q^8)(1-q^4)^3}+\frac{q^7(1+q^2)}{(1-q^8)(1-q^4)^3}\\ & 
+2\left(  \frac{q^4(1+q^8)}{(1-q^{16})(1-q^{12})(1-q^8)(1-q^4)} + \frac{q^{7}(1+q^2)}{(1-q^{12})(1-q^8)(1-q^4)^2} \right.\\ & \left. \ \ \ \ \ \  +\frac{q^{11}(1+q^2)}{(1-q^8)(1-q^4)^3}+ \frac{q^8}{(1-q^8)^2(1-q^4)^2}\right)\\ & 
+3\left(
    \frac{q^7(1+q^6)}{(1-q^{12})(1-q^8)(1-q^4)^2} + \frac{q^{9}(1+q^2)}{(1-q^8)(1-q^4)^3} \right.\\ & \left.\ \ \ \ \ \  +\frac{q^{8}(1+q^4)}{(1-q^8)(1-q^4)^3} +\frac{q^{10}}{(1-q^4)^4}\right) \pmod 4.
\end{align*}

To prove  $$ \sum_{n=0}^\infty (e_2p_4(n)-e_2p_4(n-96))q^n\equiv 0 \pmod 4,$$ we show that  \begin{equation}\label{C(q)}(1-q^{96})\sum_{n=0}^\infty e_2p_4(n)q^n \equiv C(q)\pmod 4,\end{equation} where $C(q)$ is a polynomial of degree at most $95$.

After a computer calculation, we found that
$$\displaystyle(1-q^{96})\sum_{n=0}^\infty e_2p_4(n)q^n \equiv \frac{1}{(1 - q^4)^3}q^4f(q)\pmod 4,$$ where 
 $f(q)\equiv f_0(q)+f_1(q)+f_2(q)+f_3(q)\pmod 4$, with
    \smallskip

    $f_0(q)=\sum_{i=0}^{24}a_iq^{4i}$ with $a_{23}=a_{24}=0$ and $$a_i=\begin{cases}2 & \text{ if } i\equiv 0,1\mod 3,\\ 0 & \text{ if } i\equiv -1\mod 3,\end{cases}$$

    $f_1(q)=q\sum_{i=0}^{24}b_iq^{4i}$ with $b_0=1, b_{24}=2$ and for $1\leq i\leq 23$, $$b_i=\begin{cases}-1 & \text{ if } i\equiv 0\mod 3,\\ 1 & \text{ if } i\equiv 1\mod 3,\\0 & \text{ if } i\equiv -1\mod 3,\end{cases}$$

$f_2(q)=q^2\sum_{i=0}^{24}c_iq^{4i}$, with $c_0=1$, $c_{24}=-1$ and for $1\leq i\leq 23$, $c_i=0$,\medskip

$f_3(q)=q^3\sum_{i=0}^{24}d_iq^{4i}$ with $d_{24}=0$ and for $0\leq i\leq 23$,$$d_i=\begin{cases}1 & \text{ if } i\equiv \pm 1\mod 3,\\ 2 & \text{ if } i\equiv 0\mod 3.\end{cases}$$

     Then, since $\displaystyle \frac{1}{(1-q^4)^3}=\sum_{k=0}^\infty\binom{k+2}{2}q^{4k}$, we have  \begin{align*}& \frac{1}{(1 - q^4)^3} q^4f(q) \equiv  \\ &  q^4\left(h(q) +\sum_{k=0}^\infty\left(\sum_{i=0}^{24}(a_i+qb_i+q^2c_i+q^3d_i)\binom{k+2+24-i}{2}\right)q^{4(k+24)}\right)\pmod 4,\end{align*} where \begin{align*}h(q) =& 2+q+q^2+2 q^3+8 q^4+4 q^5+3 q^6+7 q^7+18 q^8+9 q^9+6 q^{10}+16 q^{11}\\ & +34 q^{12} +15 q^{13}+10 q^{14}+31 q^{15}+58 q^{16}+23 q^{17}+15 q^{18}+53 q^{19}+90 q^{20}\\ & +33 q^{21} +21 q^{22} +83 q^{23}+132 q^{24}+44 q^{25}+28 q^{26}+123 q^{27}+186 q^{28}+57 q^{29}\\ & +36 q^{30} +174 q^{31} +252 q^{32}+72 q^{33}+45 q^{34}+237 q^{35}+332 q^{36}+88 q^{37}\\ & +55 q^{38}+314 q^{39} +428 q^{40}+106 q^{41}+66 q^{42}+406 q^{43}+540 q^{44}+126 q^{45}\\ & +78 q^{46}+514 q^{47}+670 q^{48} +147 q^{49} +91 q^{50}+640 q^{51}+820 q^{52}+170 q^{53}\\ & +105 q^{54}+785 q^{55}+990 q^{56}+195 q^{57} +120 q^{58}+950 q^{59}+1182 q^{60}+221 q^{61}\\ & +136 q^{62}+1137 q^{63}+1398 q^{64}+249 q^{65} +153 q^{66}+1347 q^{67}+1638 q^{68}\\ & +279 q^{69}+171 q^{70}+1581 q^{71}+1904 q^{72}+310 q^{73} +190 q^{74}+1841 q^{75}\\ & +2198 q^{76}+343 q^{77}+210 q^{78}+2128 q^{79}+2520 q^{80}+378 q^{81} +231 q^{82}\\ & +2443 q^{83}+2872 q^{84}+414 q^{85}+253 q^{86}+2788 q^{87}+3256 q^{88}+452 q^{89}\\ & +276 q^{90}+3164 q^{91}+3672 q^{92}+492 q^{93}+300 q^{94}+3572 q^{95}.\end{align*}  
     After reducing coefficients modulo $4$, the polynomial $q^4h(q)$ has degree   $90$. 
     
      For the expression involving binomials, we have \begin{align*}\sum_{i=0}^{24}a_i\binom{k+2+24-i}{2}& =8 (515 + 58 k + 2 k^2),\\   
\sum_{i=0}^{24}b_i\binom{k+2+24-i}{2}& =2 (268 + 23 k + k^2)\equiv 0 \pmod 4,
\\ \sum_{i=0}^{24}c_i\binom{k+2+24-i}{2}& =12 (27 + 2 k),\\ \sum_{i=0}^{24}d_i\binom{k+2+24-i}{2}& =4 (1003 + 114 k + 4 k^2).
\end{align*} 
This proves \eqref{C(q)} and completes the proof of the theorem.  
\qed

\section{Partitions associated with $e_j(\lambda)$}
Given a partition $\lambda=(\lambda_1, \lambda_2, \ldots, \lambda_\ell)$, we define $\pre_j(\lambda)$ to be the partition whose multiset of parts is  $$\{\lambda_{i_1}\lambda_{i_2}\cdots \lambda_{i_i}\mid 1\leq i_1<i_2<\cdots <i_j\leq\ell\}, \ \ \text{if } \ell\geq j.$$ If $\ell(\lambda)<j$, we set $\pre_j(\lambda):=\emptyset$. Thus, $\pre_j(\lambda)$ is the partition whose parts are the monomials  in $e_j(\lambda)$. If $\mathcal A(n) \subseteq \mathcal P(n)$, we denote by $\pre_j(\mathcal A(n))$ the set of partitions $\pre_j(\lambda)$ with $\lambda\in \mathcal A(n)$. 

We denote by $o_i(n)$ (respectively $d_i(n)$) the number of partitions in $\pre_i(\mathcal P(n))$ with odd (respectively distinct) parts. Since $\pre_1(\mathcal P(n))=\mathcal P(n)$, Euler's theorem states that $o_1(n)=d_1(n)$. 

Based on numerical evidence, we make the following conjecture. 
\begin{conjecture}We have

\begin{itemize}
\item[(i)] $o_2(n)\geq d_2(n)$  for $n\geq 0$,
\item[(ii)]$o_3(n)\geq d_3(n)$  for $n\geq 21$.
\end{itemize}
\end{conjecture}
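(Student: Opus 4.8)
The plan is to reduce the conjecture, through the combinatorial structure of the map $\pre_j$ and Euler's theorem, to an injectivity statement for $\pre_j$ restricted to partitions into odd parts. \emph{Step 1 (a structural normal form).} For any partition $\lambda$ we have $\ell(\pre_j(\lambda))=\binom{\ell(\lambda)}{j}$, which is strictly increasing in $\ell(\lambda)$ once $\ell(\lambda)\ge j$ and is $0$ when $\ell(\lambda)<j$; hence the number of parts of an element of $\pre_j(\mathcal P(n))$ recovers $\ell(\lambda)$ except when $\pre_j(\lambda)=\emptyset$. Moreover $\pre_j(\lambda)$ has all parts odd if and only if it is empty or every part of $\lambda$ is odd, because one even part of $\lambda$ forces every degree-$j$ monomial through it to be even; and if $\ell(\lambda)>j$ and $\lambda$ has a repeated part, then adjoining either of the two equal indices to a fixed $(j-1)$-subset of the remaining indices gives two equal monomials, so $\pre_j(\lambda)$ then has a repeated part. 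Putting these together, $o_j(n)$ and $d_j(n)$ can each be written as $1$ (for the empty partition) plus a single-part contribution coming from $\lambda$ with $\ell(\lambda)=j$ (the odd length-$j$ partitions of $n$ for $o_j$, all length-$j$ partitions of $n$ for $d_j$ -- in both cases an explicitly countable set) plus the cardinality of $\pre_j$ applied to, respectively, the set $\mathcal O(n)$ of partitions of $n$ into odd parts with more than $j$ parts, and the set $\mathcal S'(n)$ of strict partitions of $n$ with more than $j$ parts whose $j$-fold products are pairwise distinct.

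\emph{Step 2 (reduction via Euler).} Let $r_j(n)$ be the number of strict partitions of $n$ with more than $j$ parts but a repeated $j$-fold product, and let $c_j^{\mathrm{odd}}(n)$ and $c_j^{\mathrm{str}}(n)$ be the numbers of collisions of $\pre_j$ on $\mathcal O(n)$ and on $\mathcal S'(n)$ -- that is, the difference between the size of each family and the size of its image. Using Euler's theorem to cancel the main terms and keeping careful track of the partitions of length $\le j$, Step 1 yields an identity of the shape
\[
o_j(n)-d_j(n)=\beta_j(n)+r_j(n)+c_j^{\mathrm{str}}(n)-c_j^{\mathrm{odd}}(n),
\]
where $\beta_j(n)$ is an explicit elementary function of $n$ (for $j=2$ one checks that $\beta_2(n)\equiv 0$ for $n\ge 1$). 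Thus part (i) reduces to the inequality $c_2^{\mathrm{odd}}(n)\le r_2(n)+c_2^{\mathrm{str}}(n)$, and it would suffice to prove $c_2^{\mathrm{odd}}(n)=0$, i.e. that $\pre_2$ is injective on the partitions of $n$ into odd parts. For part (ii) the same reduction applies, and I expect $\beta_3(n)$ to be nonnegative precisely for $n\ge 21$, which would account for the threshold; the finitely many remaining small values of $n$ can be settled by the same kind of generating-function computation used in Section~3. (As a consistency check, the identity predicts $o_2(n)=d_2(n)$ until strict partitions of $n$ with a repeated pairwise product first occur and strict inequality thereafter.)

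\emph{Step 3 and the main obstacle.} The crux is therefore: can two partitions of the same integer into odd parts share the same multiset of $j$-fold products? For small length this is manageable -- when $j=2$ and $\ell(\lambda)\le 3$ one recovers the parts from $\pre_2(\lambda)$ by manipulations like $\lambda_a^2=(\lambda_a\lambda_b)(\lambda_a\lambda_c)/(\lambda_b\lambda_c)$ -- and in general the multiset of $j$-fold products of $\lambda$ determines $e_2$ of the multiset $\{\lambda_i^k\}$ for every $k$, hence the power sums $P_1,P_2,P_4,P_8,\dots$, which via Newton's identities for a multiset of size $\ell(\lambda)$ ought to pin $\lambda$ down. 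Making this rigorous is the main obstacle: such a pair of partitions automatically has equal products of parts but a priori only equal even-indexed power sums, so the common value of the sum $n$ is the decisive extra constraint, yet converting this into a proof seems to require either a delicate Newton-identity argument or an a priori bound on the largest part. Should injectivity fail in isolated cases, the fallback is to estimate $o_j(n)$ and $d_j(n)$ asymptotically through the number of odd (respectively strict) partitions of $n$ with explicit lower-order corrections and to dispatch the remaining range computationally; the agreement of the data with the stronger equality $o_2(n)=d_2(n)$ for small $n$ suggests the margin is tight but in the right direction.
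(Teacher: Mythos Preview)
This statement is a \emph{conjecture} in the paper, not a theorem: the paper does not prove it. What the paper does is observe that part~(i) would follow from a further conjecture (Conjecture~\ref{L_pre2}), namely that $\pre_2$ is injective on $\mathcal P(n)$. Assuming that injectivity, the paper argues exactly as you do in Step~1: $\pre_2(\lambda)$ has all odd parts iff $\lambda$ is odd or $\ell(\lambda)<2$, and $\pre_2(\lambda)$ can have distinct parts only if $\lambda$ is strict or $\ell(\lambda)\le 2$; then Euler's theorem closes the gap. Part~(ii) is left entirely open in the paper.

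Your Steps~1--2 follow the same route and in fact sharpen it: your identity
\[
o_2(n)-d_2(n)=r_2(n)+c_2^{\mathrm{str}}(n)-c_2^{\mathrm{odd}}(n)
\]
is correct (I checked the bookkeeping), and it shows that one only needs injectivity of $\pre_2$ on \emph{odd} partitions, a weaker hypothesis than the paper's Conjecture~\ref{L_pre2}. That is a genuine, if modest, improvement over the paper's conditional argument.

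However, your proposal is not a proof, and the obstruction is exactly the one the paper isolates: you do not establish $c_2^{\mathrm{odd}}(n)=0$. Your Step~3 sketches two ideas, both of which break down. The formula $\lambda_a^2=(\lambda_a\lambda_b)(\lambda_a\lambda_c)/(\lambda_b\lambda_c)$ presupposes that you can identify which element of the \emph{multiset} $\pre_2(\lambda)$ is $\lambda_a\lambda_b$, etc.; that labeling is precisely what is lost. The power-sum argument fares no better: from $\pre_2(\lambda)$ you recover $P_k(\lambda)^2-P_{2k}(\lambda)$ for every $k$, and together with $P_1=n$ this yields $P_1,P_2,P_4,P_8,\dots$ but never $P_3$ (the relation for $k=3$ only couples $P_3$ to $P_6$). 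For $\ell(\lambda)\ge 3$ this is not enough to pin down $\lambda$, so Newton's identities do not close the argument. The paper, for its part, proves injectivity only for $\ell(\lambda)\le 3$ (its Proposition~1) and leaves the general case open. In short: your reduction matches and slightly refines the paper's, but the decisive injectivity step remains unproved in both.
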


The first part of the conjecture follows from the conjectural injectivity of the mapping $\pre_2$. 
\begin{conjecture}\label{L_pre2}
    If $\lambda, \mu$ are partitions of $n$ for some $n\geq 0$, then $\pre_2(\lambda)=\pre_2(\mu)$ if and only if $\lambda=\mu$.
\end{conjecture}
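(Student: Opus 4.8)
The plan is to recover $\lambda$ from the partition $\pre_2(\lambda)$ by reading off the largest parts. Write $\lambda=(\lambda_1\geq \lambda_2\geq\cdots\geq\lambda_\ell)$. The largest part of $\pre_2(\lambda)$ is $\lambda_1\lambda_2$, the product of the two largest parts of $\lambda$. The second largest is $\lambda_1\lambda_3$ (with the convention that it equals $\lambda_1\lambda_2$ again if $\ell=2$, which is the degenerate case one handles separately). So the first move is: from the multiset of monomials, identify the two largest values $M_1=\lambda_1\lambda_2$ and $M_2=\lambda_1\lambda_3$ (taking multiplicities into account to detect the case $\lambda_2=\lambda_3$), and more generally the sorted list of products $\lambda_1\lambda_j$ for $j=2,\dots,\ell$. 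The key point is that these are exactly the $\ell-1$ largest monomials: any monomial $\lambda_i\lambda_j$ with $2\le i<j$ satisfies $\lambda_i\lambda_j\le\lambda_2\lambda_j\le\lambda_1\lambda_j$, so it is dominated by some product involving $\lambda_1$. One must be a little careful about ties, but the multiset of the top $\ell-1$ monomials of $\pre_2(\lambda)$ is precisely $\{\lambda_1\lambda_2,\lambda_1\lambda_3,\dots,\lambda_1\lambda_\ell\}$.

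Next I would show how to extract $\lambda_1$ from this data. We have $\lambda_1\lambda_2$ and $\lambda_1\lambda_3$ in hand; their greatest common divisor is $\lambda_1\gcd(\lambda_2,\lambda_3)$, which need not be $\lambda_1$. Instead, note $\gcd(\lambda_1\lambda_2,\lambda_1\lambda_3,\dots,\lambda_1\lambda_\ell)=\lambda_1\gcd(\lambda_2,\dots,\lambda_\ell)$, still not obviously $\lambda_1$. A cleaner route: induct on $\ell$, or better, induct on the size of the partition. Remove $\lambda_1$ by the following device. Once we know $\lambda_1$, the remaining monomials (those not of the form $\lambda_1\lambda_j$) together with the values $\lambda_j=(\lambda_1\lambda_j)/\lambda_1$ for $j\ge 2$ reconstruct $\pre_2$ of the partition $(\lambda_2,\dots,\lambda_\ell)$, and we recurse. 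So everything reduces to: \emph{can we determine $\lambda_1$ from $\pre_2(\lambda)$?} One shows $\lambda_1=M_1/\lambda_2$ where $\lambda_2$ is in turn $M_1/\lambda_1$ — circular, so we need an independent handle. The independent handle is the total number of monomials, $\binom{\ell}{2}$, which gives $\ell$; combined with $|\lambda|=n$ and the sum of all monomials being $e_2(\lambda)=\tfrac12(n^2-P_2(\lambda))$, we learn $P_2(\lambda)=\sum\lambda_i^2$. That is still not enough in general, but for the top two monomials we can argue directly: $\lambda_1$ is the unique integer $d$ such that $d\mid M_1$, $d\mid M_2$, $d\ge\sqrt{M_1}$ (since $\lambda_1\ge\lambda_2$ forces $\lambda_1^2\ge\lambda_1\lambda_2=M_1$), $M_1/d\ge M_2/d$ are realizable, and — crucially — $d$ divides \emph{every} monomial of the form (top $\ell-1$ list entry). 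I would prove $\lambda_1$ is the largest divisor common to all of $M_1,\dots,M_{\ell-1}$ that is $\ge\sqrt{M_1}$; uniqueness follows because any common divisor $d$ of all these with $d>\lambda_1$ would force $\lambda_1\lambda_j/d<\lambda_j$ for all $j$, contradicting that $\lambda_1\lambda_2/d$ must be a positive integer part $\le\lambda_1< d$ while also $\ge M_1/d\cdot$(something) — the inequalities collide.

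I expect the main obstacle to be exactly this extraction of $\lambda_1$, i.e., ruling out a spurious partition $\mu\ne\lambda$ with the same $\pre_2$ but a different largest part. The danger scenario is two partitions whose monomial multisets coincide by an arithmetic coincidence among products — for instance small cases like whether $(a,b,c,\dots)$ and $(a',b',c',\dots)$ can have $\{$pairwise products$\}$ agree. So before attempting the general induction I would want a clean lemma: \emph{the two largest monomials $M_1\ge M_2$ of $\pre_2(\lambda)$ determine $\lambda_1$ and $\lambda_2$ uniquely} (with $\lambda_1$ the larger of the two factorizations consistent with $M_1=\lambda_1\lambda_2$, $M_2/M_1 = \lambda_3/\lambda_2$...). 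Honestly, the cleanest formulation is probably to induct on $\ell(\lambda)$: the base cases $\ell\le 2$ are trivial since $\pre_2(\lambda)=(\lambda_1\lambda_2)$ and $\lambda_1+\lambda_2=n$ pins down $\{\lambda_1,\lambda_2\}$; for the inductive step, prove that $\lambda_1$ is forced by comparing $\pre_2(\lambda)$ with what $\pre_2$ of a hypothetical length-$\ell$ partition with a different top part would look like, then strip $\lambda_1$ and apply the inductive hypothesis to $(\lambda_2,\dots,\lambda_\ell)$, whose size is $n-\lambda_1$ and whose $\pre_2$-image we have identified inside $\pre_2(\lambda)$. If the forcing of $\lambda_1$ resists a short argument, this is exactly where the conjecture's difficulty lies — which is presumably why the authors state it as a conjecture rather than a lemma — and a complete proof may require a genuinely new combinatorial idea rather than the greedy "peel off the largest part" strategy sketched here.
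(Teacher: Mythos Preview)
The statement you are addressing is labeled a \emph{conjecture} in the paper, and the paper does not prove it; it only establishes the special cases $\ell(\lambda)\le 3$ (Proposition~1) and $\lambda$ binary (Proposition~2). So there is no full proof to compare your proposal against, and your closing sentence correctly anticipates that the general case is open.

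Your sketch contains a concrete error that undermines the whole peeling strategy. You assert that the top $\ell-1$ elements of the multiset $\pre_2(\lambda)$ are precisely $\{\lambda_1\lambda_2,\lambda_1\lambda_3,\dots,\lambda_1\lambda_\ell\}$. This is false already for $\ell=4$: take $\lambda=(6,5,4,1)$. The pairwise products are $30,24,20,6,5,4$, so the three largest are $30,24,20$, whereas $\{\lambda_1\lambda_2,\lambda_1\lambda_3,\lambda_1\lambda_4\}=\{30,24,6\}$. Your domination argument only shows that each $\lambda_i\lambda_j$ with $i\ge 2$ is bounded above by \emph{some} $\lambda_1\lambda_k$, not that it is bounded above by \emph{every} $\lambda_1\lambda_k$; in particular $\lambda_2\lambda_3$ can (and often does) exceed $\lambda_1\lambda_\ell$. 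Once this identification fails, you cannot isolate the products involving $\lambda_1$, so the proposed recursion ``strip off $\lambda_1$ and recurse on $(\lambda_2,\dots,\lambda_\ell)$'' never gets started. What \emph{is} true, and what the paper exploits for $\ell=3$, is that the two largest products are $\lambda_1\lambda_2$ and $\lambda_1\lambda_3$; but from the third position on, products not involving $\lambda_1$ can intrude, and distinguishing them from the genuine $\lambda_1\lambda_j$'s is exactly the obstruction. The paper's Remark after Proposition~1 hints that understanding the partial order (``lattice'') on the parts of $\pre_2(\lambda)$ for $\ell\ge 4$ is the missing ingredient, which is consistent with where your greedy approach breaks down.
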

More generally, we also conjecture the following.
\begin{conjecture}\label{L_prek}
    For $j\geq 3$, the mapping $\pre_j$ is injective on the set of partitions of $n$ of length greater than $j$.
\end{conjecture}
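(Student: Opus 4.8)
\textbf{Proof proposal for Conjecture \ref{L_prek}.}

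The plan is to recover the parts of $\lambda$ from the multiset $\pre_j(\lambda)$ by a greedy/extremal argument, exploiting that the largest entries of $\pre_j(\lambda)$ are forced to be products of the largest parts of $\lambda$. Write $\lambda=(\lambda_1\geq\lambda_2\geq\cdots\geq\lambda_\ell)$ with $\ell>j$. First I would observe that the largest part of $\pre_j(\lambda)$ is exactly $\lambda_1\lambda_2\cdots\lambda_j$, and more generally that the product $\lambda_1\cdots\lambda_{j-1}\lambda_k$ is a part of $\pre_j(\lambda)$ for every $k\geq j$; since $\lambda_1\cdots\lambda_{j-1}$ is a common factor, the multiset of these $\ell-j+1$ values determines $\lambda_1\cdots\lambda_{j-1}$ together with the multiset $\{\lambda_{j-1},\lambda_j,\ldots,\lambda_\ell\}$ up to the overall scaling $\lambda_1\cdots\lambda_{j-1}$. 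The point of requiring $\ell>j$ (rather than $\ell\geq j$) is exactly that we then have at least two parts, $\lambda_\ell$ and $\lambda_{\ell-1}$, outside the ``top block,'' so the smallest entry $\lambda_1\cdots\lambda_{j-1}\lambda_\ell$ and the structure of $\pre_j$ give enough redundancy to pin down the scaling factor.

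The key steps, in order, would be: (1) identify $M:=\lambda_1\cdots\lambda_j=\max\pre_j(\lambda)$ and, symmetrically, $m:=\lambda_{\ell-j+1}\cdots\lambda_\ell=\min\pre_j(\lambda)$; (2) show that the second-largest distinct value in $\pre_j(\lambda)$ is $\lambda_1\cdots\lambda_{j-1}\lambda_{j+1}$ (the unique way to decrease the top product ``as little as possible'' is to replace $\lambda_j$ by $\lambda_{j+1}$), so the ratio $M$ divided by that value recovers $\lambda_j/\lambda_{j+1}$; (3) iterate to peel off $\lambda_j,\lambda_{j+1},\ldots$ one ratio at a time, and simultaneously do the mirror argument from the bottom to peel off $\lambda_\ell,\lambda_{\ell-1},\ldots$; (4) reconcile the two passes to fix all absolute values, using the known product $M$ as an anchor. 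Along the way I would set up an induction on $\ell$: once the extreme parts are determined, remove the monomials involving them and recognize the remainder as $\pre_j$ of a shorter partition (this requires care, as $\pre_j(\lambda)$ is not simply $\pre_j(\lambda\setminus\{\lambda_\ell\})$ plus the monomials through $\lambda_\ell$, but the monomials containing $\lambda_\ell$ are exactly $\lambda_\ell\cdot\pre_{j-1}(\lambda\setminus\{\lambda_\ell\})$, which lets one strip a layer and induct on $j$ as well).

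The main obstacle I anticipate is step (2)–(3): proving that the \emph{ordered} structure of the large (or small) entries of $\pre_j(\lambda)$ really is dictated by the greedy substitution, with no ``accidental'' coincidences where a product of smaller parts equals a product of larger ones (e.g. $\lambda_1\lambda_4=\lambda_2\lambda_3$). Such coincidences do not by themselves break injectivity, but they obstruct the naive peeling argument, so the proof would need either a clever ordering of which monomial to extract next that is robust to multiplicative collisions, or a separate argument handling the ``generic'' case and then a perturbation/degeneration argument for the collision locus. A promising route is to work $p$-adically for a prime $p$ dividing some $\lambda_i$: the $p$-adic valuations of the entries of $\pre_j(\lambda)$ form the $j$-fold sumset of the multiset $\{v_p(\lambda_1),\ldots,v_p(\lambda_\ell)\}$, and injectivity of the map ``multiset $\mapsto$ its $j$-fold sumset'' on multisets of nonnegative integers (for $j<$ size) is a cleaner, purely additive statement that can be attacked by generating functions; combining this across all primes would then recover $\lambda$. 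I would expect the bulk of the work, and the genuine difficulty, to be packaged into that additive sumset-injectivity lemma.
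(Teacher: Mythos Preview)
This statement is a \emph{conjecture} in the paper: the authors do not prove it, so there is no proof to compare your proposal against. Your write-up is a plan rather than a proof, and the plan has real gaps beyond the collision obstacle you already flag.

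The $p$-adic route, which you single out as the most promising, cannot work as described. First, the ``additive sumset-injectivity lemma'' you would need is already false for $j=2$: the multisets $\{0,3,5,6\}$ and $\{1,2,4,7\}$ have the same multiset of pairwise sums $\{3,5,6,8,9,11\}$. Exponentiating, $\lambda=(64,32,8,1)$ and $\mu=(128,16,4,2)$ satisfy $\pre_2(\lambda)=\pre_2(\mu)$ with $\lambda\neq\mu$; they are partitions of $105$ and $150$ respectively, which shows that the size hypothesis $|\lambda|=|\mu|=n$ is \emph{essential}, yet your reduction discards it entirely. You give no argument that the sumset lemma holds for $j\geq 3$, and even if it did, there is a second problem: ``combining across primes'' is not a step, it is a wish. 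Knowing the multiset $\{v_p(\lambda_i)\}_i$ for every prime $p$ does not record which valuations sit on the same part. For instance, $\lambda=(6,1,1,1,1)$ and $\mu=(3,2,1,1,1)$ have identical $p$-valuation multisets for every $p$, and one checks that the $p$-valuation multisets of $\pre_3(\lambda)$ and $\pre_3(\mu)$ also agree for every $p$, even though $\pre_3(\lambda)\neq\pre_3(\mu)$; so your per-prime projection throws away exactly the information that distinguishes them. Any successful argument will have to use the global constraint $|\lambda|=n$ (or the joint multiplicative structure of $\pre_j(\lambda)$, not its prime-by-prime shadows), and neither your greedy peel nor the $p$-adic reduction does so.
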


Now, suppose Conjecture \ref{L_pre2} is true. 
Let $n\geq 0$. Denote by $\mathcal O_j(n)$ (respectively $\mathcal D_j(n)$) the set of  partitions in $\pre_j(\mathcal P(n))$ with odd (respectively distinct) parts.  If $\pre_2(\lambda)\in \mathcal O_2(n)$, then $\lambda\in \mathcal O_1(n)\subseteq \mathcal P(n)$, that is, $\lambda$ has odd parts or $\ell(\lambda)=1$. 
Then, by Conjecture \ref{L_pre2}, $o_2(n)=o_1(n)$ if $n$ odd and $o_2(n)=o_1(n)+1$ if $n$ even.

Clearly, if $\lambda$ has a repeated part and $\ell(\lambda)>2$, then  $\pre_2(\lambda)\not\in \mathcal D_2(n)$. If $n$ is even, then $\pre_2((n/2,n/2))=(n^2/4)\in \mathcal D_2(n)$. We have $d_2(0)=d_1(0)=1$.   If $n$ is odd, $d_2(n)\leq d_1(n)$. If $n>0$ is even $d_2(n)\leq d_1(n)+1$. 

Hence $o_2(n)\geq d_2(n)$ for all $n\geq 0$. 

Next we prove the injectivity of the mapping $\pre_2$ on the set of partitions of $n$ of length at most $3.$

\begin{remark} In general, $|\lambda|\neq |\pre_i(\lambda)|$. Clearly, $\pre_j(\lambda)=\emptyset$ if and only if $\ell(\lambda)<j$. Moreover, $\ell(\pre_j(\lambda))=\displaystyle\binom{\ell(\lambda)}{j}$. 
\end{remark}
\begin{proposition}
   If $\lambda, \mu$ are partitions of $n$ for some $n\geq 0$ and $\ell(\lambda)$, $\ell(\mu)\leq 3$, then $\pre_2(\lambda)=\pre_2(\mu)$ if and only if $\lambda=\mu$. 
\end{proposition}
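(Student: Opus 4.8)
The plan is to show that a partition of length at most $3$ is determined by its first three elementary symmetric polynomials $e_1,e_2,e_3$ (via Vieta's formulas), and that all three of these are recoverable from $\pre_2(\lambda)$. The converse direction is immediate from the definition, so I focus on the forward direction.

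First I would dispose of the degenerate case. Since $\pre_2(\lambda)=\emptyset$ exactly when $\ell(\lambda)\le 1$, if $\pre_2(\lambda)=\pre_2(\mu)=\emptyset$ then both $\lambda$ and $\mu$ equal $\emptyset$ (when $n=0$) or $(n)$ (when $n>0$), and we are done. So assume $\pre_2(\lambda)=\pre_2(\mu)\neq\emptyset$, whence $\ell(\lambda),\ell(\mu)\in\{2,3\}$. By the Remark preceding the proposition, $\ell(\pre_2(\lambda))=\binom{\ell(\lambda)}{2}$, and since $k\mapsto\binom{k}{2}$ is strictly increasing on $\{2,3\}$, equality of $\pre_2(\lambda)$ and $\pre_2(\mu)$ forces $\ell(\lambda)=\ell(\mu)=:m$.

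Next I would recover the symmetric functions. We always have $e_1(\lambda)=|\lambda|=n=e_1(\mu)$. The multiset of parts of $\pre_2(\lambda)$ is $\{\lambda_i\lambda_j:1\le i<j\le m\}$, so the sum of those parts is $e_2(\lambda)$; hence $e_2(\lambda)=e_2(\mu)$. If $m=2$, then $\lambda_1,\lambda_2$ are the two roots of $x^2-nx+e_2(\lambda)$ and likewise for $\mu$, so $\lambda=\mu$. If $m=3$, I would use the extra observation that the \emph{product} of the parts of $\pre_2(\lambda)$ is $(\lambda_1\lambda_2)(\lambda_1\lambda_3)(\lambda_2\lambda_3)=e_3(\lambda)^2$; equality of these products gives $e_3(\lambda)^2=e_3(\mu)^2$, and nonnegativity yields $e_3(\lambda)=e_3(\mu)$. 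Then $\lambda_1,\lambda_2,\lambda_3$ are the roots, with multiplicity, of the monic cubic $x^3-e_1(\lambda)x^2+e_2(\lambda)x-e_3(\lambda)$, which coincides with the corresponding cubic for $\mu$; hence $\lambda$ and $\mu$ have the same multiset of parts, i.e. $\lambda=\mu$.

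I do not expect a serious obstacle here; the only step requiring a small idea is extracting $e_3$ from $\pre_2(\lambda)$ in the length-$3$ case, which works because the product of the three pairwise products of three numbers is the square of their product and all quantities involved are nonnegative. This is precisely the trick that does not obviously persist for $\pre_j$ with larger $j$ or larger lengths, which is presumably why Conjectures \ref{L_pre2} and \ref{L_prek} remain open in general.
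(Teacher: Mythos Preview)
Your proof is correct and takes a genuinely different route from the paper's. The paper handles the three lengths separately and more concretely: for $\ell=2$ it observes that the products $k(n-k)$ for $k=1,\ldots,\lfloor n/2\rfloor$ are strictly increasing and hence distinct; for $\ell=3$ it first uses the ordering $\lambda_1\lambda_2\ge\lambda_1\lambda_3\ge\lambda_2\lambda_3$ to match the individual pairwise products $\lambda_i\lambda_j=\mu_i\mu_j$, then performs an ad hoc algebraic manipulation (multiplying $\sum\lambda_i=\sum\mu_i$ through by $\mu_3$ and factoring) to force $\lambda_3=\mu_3$ and hence $\lambda=\mu$. Your argument instead extracts the full list of elementary symmetric functions $e_1,e_2,e_3$ directly from $n$ together with the sum and product of the parts of $\pre_2(\lambda)$, and then invokes Vieta to identify the multisets of parts. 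This is more uniform across the two nontrivial lengths and makes transparent exactly which symmetric-function information $\pre_2$ retains; the paper's approach, by contrast, never explicitly names $e_3$ and relies on the specific ordering of the three pairwise products, which is a more hands-on but less conceptual route to the same conclusion.
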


\begin{proof}  Let $\lambda, \mu$ be partitions of $n$.  By definition, if $\lambda=\mu$, then $\pre_2(\lambda)=\pre_2(\mu)$. Next, assume $\pre_2(\lambda)=\pre_2(\mu)\neq\emptyset$. Then $\ell(\lambda)=\ell(\mu)=:\ell$. 

\noindent \underline{Case $\ell=1$.} Clearly  $\pre_2(\lambda)=\emptyset$ if and only if  $\lambda=(n)$, so $\lambda = \mu$.  

\noindent \underline{Case $\ell=2$.}
The partitions of $n$ into two parts are $\mathcal P(2,n)=\{(n-1,1), (n-2,2), (n-3,3), \ldots, (\lceil n/2 \rceil, \lfloor n/2 \rfloor)\}$, and so $$\pre_2(\mathcal P(2,n)) = \{ (n-1), (2(n-2)), (3(n-3)), \ldots, (\lceil n/2 \rceil  \lfloor n/2 \rfloor)\}.$$ The sizes of those partitions are strictly increasing and hence distinct. It follows that $\lambda=\mu$.

\noindent \underline{Case $\ell=3$.} By the order relations between parts, in any partition $\lambda$ with $\ell(\lambda)=3$ we have $\lambda_1\lambda_2\geq \lambda_1\lambda_3\geq\lambda_2\lambda_3$. Thus $\lambda_i\lambda_j=\mu_i\mu_j$ if $1\leq i<j\leq 3$.

Since $\lambda_1+\lambda_2 +\lambda_3=\mu_1+\mu_2+\mu_3$, we have \begin{align*}\lambda_1\mu_3+\lambda_2\mu_3 +\lambda_3\mu_3& =\mu_1\mu_3+\mu_2\mu_3 +\mu_3\mu_3 \\ & =\lambda_1\lambda_3+\lambda_2\lambda_3+\mu_3\mu_3,\\ (\mu_3-\lambda_3)(\lambda_1+\lambda_2 -\mu_3)& = 0.  
\end{align*}
Since $\lambda_1\geq\mu_3$ and $\lambda_2>0$, we must have $\mu_3=\lambda_3$. Then from $\lambda_i\lambda_3=\mu_i\mu_3$ for $i=1,2$, it follows that $\lambda=\mu$. 
\end{proof}

\begin{remark}
    If $\ell(\lambda)\geq 4$, the partition $\lambda$ determines a partial order on the parts of $\pre_2(\lambda)$. Understanding the lattice of parts of $\pre_2(\lambda)$ may lead to the proof of the injectivity of $\pre_2$ on $\mathcal P(n)$.
\end{remark}

Let $\mathcal B(n)$ be the set of binary partitions of $n$. Recall that  $B(n)=|\mathcal B(n)|$.  We prove the injectivity of $\pre_2$ on the set $\mathcal B(n)$. 

\begin{proposition} Let $\lambda, \mu\in \mathcal B(n)$. Then $\pre_2(\lambda)=\pre_2(\mu)$ if and only if $\lambda=\mu$.
\end{proposition}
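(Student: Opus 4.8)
The plan is to encode each binary partition by the generating polynomial of its multiplicities, under which $\pre_2$ becomes a simple algebraic operation. For $\lambda\in\mathcal B(n)$ put $F_\lambda(x)=\sum_{i\ge 0}m_\lambda(2^i)\,x^i$; this is a polynomial with non-negative integer coefficients, the assignment $\lambda\mapsto F_\lambda$ is a bijection from $\mathcal B(n)$ onto the set of such polynomials $F$ with $F(2)=n$, and $n=F_\lambda(2)$. First I would record that a product of two powers of $2$ is again a power of $2$, so $\pre_2(\lambda)$ is itself a binary partition, and prove the identity
\begin{equation}\label{eq:plan-gf}
F_{\pre_2(\lambda)}(x)=\tfrac12\bigl(F_\lambda(x)^2-F_\lambda(x^2)\bigr),
\end{equation}
the polynomial counterpart of $e_2(\lambda)=\tfrac12\bigl(P_1(\lambda)^2-P_2(\lambda)\bigr)$. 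This is a one-line count: the coefficient of $x^s$ on the left is the number of unordered pairs of parts of $\lambda$ with product $2^s$, namely $\sum_{i<j,\,i+j=s}m_im_j+[\,s\text{ even}\,]\binom{m_{s/2}}{2}$ with $m_i:=m_\lambda(2^i)$, and expanding the right-hand side produces the same expression (in particular the right-hand side has even coefficients).

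Now let $\lambda,\mu\in\mathcal B(n)$ with $\pre_2(\lambda)=\pre_2(\mu)$; we may assume $n\ge 1$. By \eqref{eq:plan-gf} we have $F_\lambda(x)^2-F_\lambda(x^2)=F_\mu(x)^2-F_\mu(x^2)$, so setting $D:=F_\lambda-F_\mu$ and factoring the difference of squares gives
\begin{equation}\label{eq:plan-D}
D(x)\,\bigl(F_\lambda(x)+F_\mu(x)\bigr)=D(x^2).
\end{equation}
It suffices to show $D=0$. Suppose not, and let $d$ be the least index with $[x^d]D\ne 0$; then the right-hand side of \eqref{eq:plan-D} has its lowest-degree term in degree $2d$. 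On the left, $F_\lambda+F_\mu$ is a nonzero polynomial with non-negative coefficients (nonzero because $n\ge 1$), say with lowest-degree term in degree $e$, so the left-hand side has lowest-degree term in degree $d+e$. Hence $e=d$, which forces $m_\lambda(2^i)=m_\mu(2^i)=0$ for all $i<d$, and comparing coefficients of $x^{2d}$ on the two sides of \eqref{eq:plan-D} yields $m_\lambda(2^d)+m_\mu(2^d)=1$; that is, exactly one of $m_\lambda(2^d),\,m_\mu(2^d)$ equals $1$ and the other equals $0$.

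The contradiction then comes from $\lambda$ and $\mu$ being partitions of the \emph{same} $n$: since $m_\lambda(2^i)=0$ for $i<d$ we have $n=\sum_{i\ge d}m_\lambda(2^i)\,2^i$, hence $2^d\mid n$ and $n/2^d\equiv m_\lambda(2^d)\pmod 2$, and the identical computation for $\mu$ gives $n/2^d\equiv m_\mu(2^d)\pmod 2$; thus $m_\lambda(2^d)\equiv m_\mu(2^d)\pmod 2$, which is impossible when one of them is $0$ and the other $1$. Therefore $D=0$, so $F_\lambda=F_\mu$, so $\lambda=\mu$; the converse implication is immediate. I expect the routine parts to be the identity \eqref{eq:plan-gf} and the lowest-degree-term comparison, and the genuine crux to be this last parity step; it is also the only place the common value $n$ is used, and it is genuinely needed, since $\pre_2\bigl((4,1)\bigr)=\pre_2\bigl((2,2)\bigr)=(4)$ shows that $\pre_2$ fails to be injective on the union of the $\mathcal B(n)$.
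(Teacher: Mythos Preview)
Your proof is correct and takes a genuinely different route from the paper's. The paper argues constructively: given $\nu=\pre_2(\lambda)$, it recovers the multiplicities $m_\lambda(2^k)$ one at a time from the relation
\[
m_\nu(2^k)=\sum_{\substack{i+j=k\\ i<j}} m_\lambda(2^i)\,m_\lambda(2^j)+[\,k\text{ even}\,]\binom{m_\lambda(2^{k/2})}{2},
\]
after first reducing (by halving all parts, using the parity of $n$) to the situation $m_\lambda(1)\ge 1$, so that the $i=0$ term isolates $m_\lambda(2^k)$. Your approach packages the same combinatorics into the single polynomial identity $F_{\pre_2(\lambda)}(x)=\tfrac12\bigl(F_\lambda(x)^2-F_\lambda(x^2)\bigr)$ and then closes with a lowest-degree comparison plus a parity step. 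The paper's version is more explicitly algorithmic (one sees exactly how to invert $\pre_2$ on $\mathcal B(n)$); yours is cleaner, avoids the case split on the parity of $n$, and makes transparent precisely where the hypothesis $|\lambda|=|\mu|$ is used --- your closing example $\pre_2\bigl((4,1)\bigr)=\pre_2\bigl((2,2)\bigr)=(4)$ is a nice complement that the paper does not include.
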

\begin{proof} 
Let $\lambda \in \mathcal B(n)$ and set $\nu:=\pre_2(\lambda)$.  Then  $\nu$ is a binary partition. We show that $\nu$ completely determines $\lambda$. 

\noindent \underline{Case 1:}  $m_\nu(1) \neq 0$.  

Since $m_\nu(1)=\binom{m_\lambda(1)}{2}$, $m_\nu(1)$ determines $m_\lambda(1)$. 
Since $m_\nu(2)=m_\lambda(1)m_\lambda(2)$, $m_\lambda(2)$ is determined by $m_\nu(1)$ and $m_\nu(2)$. 
 
 We proceed inductively. For $k\geq 2$, we have 
 $$m_\nu(2^k)=\begin{cases} \displaystyle  \binom{m_\lambda(2^{ k/2})}{2}+\sum_{i=0}^{k/2-1}m_\lambda(2^{k-i})m_\lambda(2^i)  &  \text{ if $k$ is even},\\  \ \\  \displaystyle\sum_{i=0}^{\lfloor k/2\rfloor}m_\lambda(2^{k-i})m_\lambda(2^i)  & \text{ if $k$ is odd}. \end{cases}$$ This determines $m_\lambda(2^k)$ from $m_\nu(2^k)$ and $m_\lambda(2^j)$ for $0\leq j\leq k-1$. 

 \noindent \underline{Case 2:} $m_\nu(1) = 0$. 

 If $n$ is odd, then $m_\lambda(1)=1$ and the argument above show that $\nu$ determines $\lambda$. 

 If $n$ is even, then $m_\lambda(1)=0$. We have $\lambda/2\in \mathcal B(n/2)$ and  $\pre_2(\lambda/2)=\nu/4$. We repeat the process until  $\lambda/2^i$ has a part equal to $1$. Then $\pre_2(\lambda/2^i)=\nu/{4^i}$ completely determines $\lambda/2^i$ and thus $\nu$ determines $\lambda$. 
 \end{proof}

Next, we prove a recurrence for the number $b_{1,2}(n)$ of parts equal to $1$ in all partitions in $\pre_2(\mathcal B(n))$.

\begin{theorem} \label{rec_b12} The sequence $b_{1,2}(n)$ satisfies the following recurrence. If $n\geq 3$, $$b_{1,2}(n)=b_{1,2}(n-1)+b_{1,2}(\lfloor (n+1)/2\rfloor)+ b_{1,2}(\lceil (n+1)/2 \rceil),$$ and $b_{1,2}(1)=0$, $b_{1,2}(2)=1$.
    \end{theorem}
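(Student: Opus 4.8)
The plan is to count parts equal to $1$ in $\pre_2(\mathcal B(n))$ by conditioning on $m_\lambda(1)$, the multiplicity of the part $1$ in a binary partition $\lambda$ of $n$. A part equal to $1$ in $\pre_2(\lambda)$ arises exactly from a product $\lambda_{i}\lambda_{j}=1$, i.e. from a pair of parts of $\lambda$ both equal to $1$; hence the number of $1$'s contributed by $\lambda$ is $\binom{m_\lambda(1)}{2}$. Therefore
\begin{equation*}
b_{1,2}(n)=\sum_{\lambda\in\mathcal B(n)}\binom{m_\lambda(1)}{2}=\sum_{k\geq 0}\binom{k}{2}\,\#\{\lambda\in\mathcal B(n): m_\lambda(1)=k\}.
\end{equation*}
As observed earlier in the paper, $m_\lambda(1)=k$ if and only if $(\lambda\setminus(1^k))/2\in\mathcal B((n-k)/2)$, so the inner count is $B((n-k)/2)$ (with the convention that this is $0$ unless $n-k$ is a nonnegative even integer). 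This gives the closed form $b_{1,2}(n)=\sum_{k\geq 0}\binom{k}{2}B((n-k)/2)$, which I will use both to establish the base cases and to drive the recurrence.

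Next I would verify the base cases directly: for $n=1$ the only binary partition is $(1)$ with $m_\lambda(1)=1$, so $\binom12=0$ and $b_{1,2}(1)=0$; for $n=2$ the partitions are $(2)$ and $(1,1)$, contributing $\binom02=0$ and $\binom22=1$, so $b_{1,2}(2)=1$. For the recurrence, the natural approach is to split the sum $b_{1,2}(n)=\sum_{k}\binom{k}{2}B((n-k)/2)$ according to whether a partition counted here has $m_\lambda(1)\geq 1$ or $m_\lambda(1)=0$, mirroring the two-case structure already used in the preceding proposition. Removing one part equal to $1$ from a $\lambda$ with $m_\lambda(1)\geq 1$ biject\-ively relates $\mathcal B(n)$-partitions with at least one $1$ to $\mathcal B(n-1)$-partitions, but the statistic $\binom{m_\lambda(1)}{2}$ is not preserved, so I would instead work with the convolution identity: using $\binom{k}{2}=\binom{k-1}{2}+(k-1)$ I can peel off a term $b_{1,2}(n-1)$ plus a correction $\sum_{k\geq 1}(k-1)B((n-k)/2)$, and then identify that correction with $b_{1,2}(\lfloor(n+1)/2\rfloor)+b_{1,2}(\lceil(n+1)/2\rceil)$. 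Concretely, $\sum_{k\geq1}(k-1)B((n-k)/2)$ counts ordered pairs: a part equal to $1$ together with a partition, which corresponds to choosing a part $1$ in $\lambda$ and matching it against the remaining $1$'s; a cleaner route is to use generating functions. The generating function for $b_{1,2}$ is $F(q)=\binom{q}{?}$—more precisely, since $\sum_n B(n)q^n=\prod_{i\geq0}(1-q^{2^i})^{-1}$ and inserting a factor tracking $m_\lambda(1)$ and then applying $\frac12\partial_z^2|_{z=1}$ to $\prod(1-zq)^{-1}\prod_{i\geq1}(1-q^{2^i})^{-1}$ gives $F(q)=\dfrac{q^2}{(1-q)^3}\prod_{i\geq1}\dfrac{1}{1-q^{2^i}}$. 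From $\sum B(n)q^n=(1-q)^{-1}\prod_{i\geq1}(1-q^{2^i})^{-1}$ one reads off $\prod_{i\geq1}(1-q^{2^i})^{-1}=(1-q)\sum B(n)q^n$, hence $F(q)=\dfrac{q^2}{(1-q)^2}\sum_n B(n)q^n$, and since $\sum B(n)q^n$ satisfies $\sum B(n)q^n=(1-q)^{-1}\sum B(n)q^{2n}$, standard manipulation of $\dfrac{q^2}{(1-q)^2}$ against the binary-partition functional equation yields exactly the claimed three-term recurrence.

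The main obstacle is bookkeeping the floor/ceiling split cleanly: the recurrence has the asymmetric shape $b_{1,2}(\lfloor(n+1)/2\rfloor)+b_{1,2}(\lceil(n+1)/2\rceil)$, which is what one gets from $\dfrac{q^2}{(1-q)^2}$ acting multiplicatively together with the doubling $q\mapsto q^2$ in $\prod_{i\geq1}(1-q^{2^i})^{-1}$, so I expect the cleanest presentation to be: establish $F(q)(1-q)=q^2(1-q)^{-1}\sum B(n)q^n$, substitute the functional equation $\sum B(n)q^n=(1-q)^{-1}\sum B(n)q^{2n}$, and compare coefficients, treating $n$ even and $n$ odd separately to resolve the floors and ceilings. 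The even/odd case distinction and the verification that the small-$n$ boundary terms match (which is why the recurrence is only asserted for $n\geq 3$) will be the fussiest part; everything else is a routine generating-function computation once the formula $b_{1,2}(n)=\sum_k\binom{k}{2}B((n-k)/2)$ is in hand.
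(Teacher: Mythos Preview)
Your approach is correct and essentially the same as the paper's: both rest on the identity $b_{1,2}(n)=\sum_k\binom{k}{2}B((n-k)/2)$, the generating function $\frac{q^2}{(1-q)^2}\sum_n B(n)q^n$ (which equals the paper's $G(q)$), and the functional equation $F(q)=\frac{(1+q)^2}{q^2(1-q)}F(q^2)$ coming from the binary-partition product. The only organizational difference is that the paper argues in both directions---deriving the functional equation from the recurrence and separately showing the generating function of $b_{1,2}$ satisfies it---whereas you go straight from the generating function to the recurrence by comparing coefficients; your route is slightly more direct.
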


    \begin{proof}
    We first consider the  generating function for a sequence $a(n)$ defined by the recurrence  $$a(n)=a(n-1)+a(\lfloor (n+1)/2\rfloor)+a(\lceil (n+1)/2 \rceil), \ \ n\geq 3,$$ and $a(1)=0, \, a(2)=1$. This is sequence A131205 in \cite{OEIS} shifted by one. Let $$F(q):=\sum_{n=1}^\infty a(n)q^n.$$ Then 
\begin{align*}
F(q)& =q^2+\sum_{n=3}^\infty a(n)q^n\\ & = q^2+ \sum_{n=3}^\infty a(n-1)q^n + \sum_{n=3}^\infty a(\lfloor (n+1)/2\rfloor)q^n
   +  \sum_{n=3}^\infty a(\lceil (n+1)/2 \rceil)q^n\\ & = q^2+qF(q)+F(q^2)+\frac{1}{q}F(q^2)+\frac{1}{q}F(q^2)+\frac{1}{q^2}\big(F(q^2)-q^4\big).
\end{align*}Hence $$F(q)=\frac{(1+q)^2}{q^2(1-q)}F(q^2).$$ 
Now let
$$G(q):=\frac{q^2}{1-q} \prod_{i=0}^{\infty} \frac{1+q^{2^i}}{1-q^{2^i}}.$$
We can write
\begin{align*}
G(q) &=\frac{q^2}{1-q} \frac{1+q}{1-q}\prod_{i=1}^{\infty} \frac{1+q^{2^i}}{1-q^{2^i}}\\
&=\frac{q^2}{1-q} \frac{1+q}{1-q} \prod_{i=0}^{\infty} \frac{1+q^{2^{i+1}}}{1-q^{2^{i+1}}}\\
&=\frac{q^2}{1-q} \frac{1+q}{1-q} \frac{1-q^2} {q^4} \frac{q^4}{1-q^2} \prod_{i=0}^{\infty} \frac{1+q^{2^{i+1}}}{1-q^{2^{i+1}}}\\
&=\frac{(1+q)^2}{q^2(1-q)}G(q^2).
\end{align*}
Therefore, $F(q)=G(q).$

Next, recall that $\pre_2$ is injective on $\mathcal B(n)$. As previously mentioned,   if $\lambda\in \mathcal B(n)$ and $\nu=\pre_2(\lambda)$, then $m_\nu(1)=\binom{m_\lambda(1)}{2}$.

As in the proof of Theorem \ref{Th3}, the number of partitions $\lambda\in \mathcal B(n)$ with $m_\lambda(1)=k$ equals $B(\frac{n-k}{2})$, so  $$b_{1,2}(n)=\sum_{k=0}^\infty\binom{k}{2}B\left(\frac{n-k}{2}\right).$$

The generating function for $ B(n)$ is
$$ \sum_{n=0}^\infty  B(n)\,q^n = \prod_{i=0}^\infty \frac{1}{1-q^{2^i}} = (1-q) \prod_{i=0}^\infty \frac{1+q^{2^i}}{1-q^{2^i}}.
$$
The generating function for $\binom{n}{2}$ is
$$
\sum_{n=0}^\infty \binom{n}{2}\,q^n = \frac{q^2}{(1-q)^3}.
$$
For $n=2m$, 
$$\sum_{k=0}^\infty\binom{k}{2}B\left(\frac{n-k}{2}\right)
=\sum_{k=0}^\infty\binom{k}{2}B\left(m-\frac{k}{2}\right)
=\sum_{k=0}^\infty\binom{2k}{2}B\left(m-k\right).
$$
For $n=2m+1$, 
$$\sum_{k=0}^\infty\binom{k}{2}B\left(\frac{n-k}{2}\right)
=\sum_{k=0}^\infty\binom{k}{2}B\left(m-\frac{k-1}{2}\right)
=\sum_{k=0}^\infty\binom{2k+1}{2}B\left(m-k\right).
$$
The generating function for $\binom{2n}{2}$ is
$$
\sum_{n=0}^\infty \binom{2n}{2}\,q^n = \frac{q(1+3q)}{(1-q)^3}.
$$
The generating function for $\binom{2n+1}{2}$ is
$$
\sum_{n=0}^\infty \binom{2n+1}{2}\,q^n = \frac{q(3+q)}{(1-q)^3}.
$$
We deduce that  the generating function for 
$$b_{1,2}(n)=\sum_{k=0}^\infty\binom{k}{2}B\left(\frac{n-k}{2}\right)$$
is given by 
\begin{align*}
 \sum_{n=0}^\infty b_{1,2}(n)q^n &=  \left(\frac{q^2(1+3q^2)}{(1-q^2)^2}+\frac{q^3(3+q^2)}{(1-q^2)^2} \right) \prod_{i=0}^\infty \frac{1+q^{2^{i+1}}}{1-q^{2^{i+1}}}\\ & 
=\frac{q^2(1+q)}{(1-q)^2} \prod_{i=0}^\infty \frac{1+q^{2^{i+1}}}{1-q^{2^{i+1}}}\\
& = \frac{q^2(1+q)}{(1-q)^2} \prod_{i=1}^\infty \frac{1+q^{2^{i}}}{1-q^{2^{i}}} \\ & 
= \frac{q^2(1+q)}{(1-q)^2} \frac{1-q}{1+q} \prod_{i=0}^\infty \frac{1+q^{2^{i}}}{1-q^{2^{i}}}\\
& = \frac{q^2}{1-q} \prod_{i=0}^\infty \frac{1+q^{2^{i}}}{1-q^{2^{i}}}=G(q).
\end{align*}
\end{proof}

Next we consider $b_{2,2}(n)$, the number of parts equal to $2$ in all partitions in $\pre_2(\mathcal B(n))$.
For $n\geq 1$, set $\Delta b_{2,2}(n) := b_{2,2}(n+1)-b_{2,2}(n)$.

\begin{theorem} \label{rec_b22} 
For $n\geq 0$,  we have 
$\Delta b_{2,2}(2n+1) = \Delta b_{2,2}(2n) = b_{1,2}(n+1)$.
    \end{theorem}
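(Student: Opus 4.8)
The plan is to derive the generating function for $b_{2,2}(n)$ in the spirit of the proof of Theorem \ref{rec_b12}, and then read off the claimed difference identity from it. As in that proof, the key input is that for $\lambda\in\mathcal B(n)$ with $\nu=\pre_2(\lambda)$, the multiplicity $m_\nu(2)$ equals $m_\lambda(1)\,m_\lambda(2)$ (a monomial of $e_2$ equals $2$ precisely when it is the product of a part equal to $1$ and a part equal to $2$). Combining this with the fact, used already twice in the paper, that the number of $\lambda\in\mathcal B(n)$ with $m_\lambda(1)=k$ equals $B\!\left(\tfrac{n-k}{2}\right)$, I would want a formula of the form
\[
b_{2,2}(n)=\sum_{k,m}(\text{something})\,B\!\left(\tfrac{n-k-2m}{2}\right),
\]
but it is cleaner to split off the part $1$'s first, then the part $2$'s: if $m_\lambda(1)=k$ then $(\lambda\setminus 1^k)/2\in\mathcal B\!\left(\tfrac{n-k}{2}\right)$, and within that partition the number of parts equal to $2$ is the number of parts equal to $1$ in $(\lambda\setminus 1^k)/2$ after one more halving is not quite it either — so instead I would track $m_\lambda(2)$ directly. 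Writing $n-k=2m$, the partitions $(\lambda\setminus 1^k)/2$ range over $\mathcal B(m)$ and contribute $\sum_{\mu\in\mathcal B(m)} k\,m_\mu(1)$, i.e. $k\cdot b_{1}(m)$ where $b_1(m):=\sum_{\mu\in\mathcal B(m)}m_\mu(1)$ is the total number of $1$'s in all binary partitions of $m$. Hence
\[
b_{2,2}(n)=\sum_{k\ge 0} k\, b_1\!\left(\tfrac{n-k}{2}\right).
\]

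Next I would assemble the generating function for $b_1(m)$. Since $b_1(m)=\sum_{k\ge0} k\,B\!\left(\tfrac{m-k}{2}\right)$ by the same multiplicity-shift argument (this is essentially the $j=1$ analogue already implicit in the paper), and $\sum_k k\,q^k = q/(1-q)^2$, the now-familiar even/odd splitting of the index $k$ (exactly as done for $\binom{2n}{2}$ and $\binom{2n+1}{2}$ in the previous proof) gives a closed product formula for $\sum_m b_1(m)q^m$ in terms of $\prod_{i\ge0}(1+q^{2^i})/(1-q^{2^i})$. Feeding this into $b_{2,2}(n)=\sum_k k\,b_1\!\left(\tfrac{n-k}{2}\right)$, splitting the outer sum according to the parity of $n$ once more, and simplifying the resulting rational prefactors, I expect to obtain a clean expression
\[
\sum_{n\ge0} b_{2,2}(n)q^n = R(q)\,\prod_{i\ge0}\frac{1+q^{2^i}}{1-q^{2^i}}
\]
for some explicit rational function $R(q)$ with denominator a power of $(1-q)$ and $(1-q^2)$.

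Finally, the difference identity. Multiplying by $(1-q)$ turns $\sum b_{2,2}(n)q^n$ into $\sum(b_{2,2}(n)-b_{2,2}(n-1))q^n=\sum_{n\ge1}\Delta b_{2,2}(n-1)q^{n}$ up to the reindexing convention; the claim $\Delta b_{2,2}(2n)=\Delta b_{2,2}(2n+1)=b_{1,2}(n+1)$ says precisely that
\[
(1-q)\sum_{n\ge0} b_{2,2}(n)q^n \;=\; q\,(1+q)\sum_{n\ge0} b_{1,2}(n+1)q^{2n}
\;=\;\frac{q(1+q)}{q^2}\!\left(\sum_{n\ge0}b_{1,2}(n)q^{2n}-q^2\, b_{1,2}(1)-\cdots\right),
\]
i.e. up to low-order correction terms it reduces to an identity between $(1-q)\sum b_{2,2}(n)q^n$ and $(1+q)q^{-1}\sum b_{1,2}(n)q^{2n}$. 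Since Theorem \ref{rec_b12} already gives $\sum b_{1,2}(n)q^n=G(q)=\frac{q^2}{1-q}\prod_{i\ge0}\frac{1+q^{2^i}}{1-q^{2^i}}$, the substitution $q\mapsto q^2$ produces $\sum b_{1,2}(n)q^{2n}=\frac{q^4}{1-q^2}\prod_{i\ge0}\frac{1+q^{2^{i+1}}}{1-q^{2^{i+1}}}=\frac{q^4}{1-q^2}\cdot\frac{1-q}{1+q}\prod_{i\ge0}\frac{1+q^{2^i}}{1-q^{2^i}}$, so everything comes down to checking a single rational-function identity $(1-q)R(q) = (1+q)q^{-1}\cdot\frac{q^4(1-q)}{(1-q^2)(1+q)}$, after which comparing coefficients of $q^n$ on both sides — being careful with the reindexing of $\Delta$ and with the initial values $b_{1,2}(1)=0$, $b_{1,2}(2)=1$, $b_{2,2}(1)=0$ — yields the theorem. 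The main obstacle I anticipate is purely bookkeeping: getting the rational prefactor $R(q)$ right through the two nested even/odd index splittings, and then matching the shift/reindexing in the definition $\Delta b_{2,2}(n)=b_{2,2}(n+1)-b_{2,2}(n)$ against the coefficient extraction, so that the small-$n$ boundary terms cancel exactly rather than leaving spurious residue; no step is conceptually hard, but the chance of an off-by-one or a stray factor of $q$ is high, so I would verify the final formula against the recurrence for $b_{1,2}$ and a few computed values of $b_{2,2}(n)$.
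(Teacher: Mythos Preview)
Your plan is sound and would lead to a correct proof if carried out, but the paper takes a shorter route that sidesteps exactly the bookkeeping you flag as the main obstacle. Where you propose the nested sum $b_{2,2}(n)=\sum_k k\,b_1\bigl(\tfrac{n-k}{2}\bigr)$ and two rounds of even/odd index-splitting to produce the generating function of $b_{2,2}$ itself before multiplying by $1-q$, the paper works directly with the differences. Via the bijections $\lambda\mapsto\lambda\cup(1)$ from $\mathcal B(2n)$ to $\mathcal B(2n+1)$ and $\lambda\mapsto\lambda\setminus(1)$ from $\mathcal B^*(2n)$ to $\mathcal B(2n-1)$, one sees combinatorially that $\Delta b_{2,2}(n)=\sum_{\lambda\in\mathcal B(n)} m_\lambda(2)$, the total number of parts equal to $2$ in all binary partitions of $n$. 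This has the immediate generating function $\dfrac{q^2}{1-q^2}\prod_{i\ge0}\dfrac{1}{1-q^{2^i}}$, and a single even/odd split (substituting $q\mapsto -q$ and taking half-sum and half-difference) then matches $\sum_n\Delta b_{2,2}(2n)q^n$ and $\sum_n\Delta b_{2,2}(2n+1)q^n$ against $q^{-1}G(q)$ from Theorem~\ref{rec_b12}. The payoff of the paper's approach is that the auxiliary function $b_1$ and the second layer of parity bookkeeping never appear, so there is no rational prefactor $R(q)$ to nurse through two splittings and no boundary-term matching to worry about. Your approach, by contrast, is purely mechanical and would actually compute $\sum_n b_{2,2}(n)q^n$ in closed form, which the paper never does; if one wanted $b_{2,2}(n)$ itself rather than only its first differences, your route would be the natural one.
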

\begin{proof}
We first define two bijections, $\varphi$ and $\psi$. Let $n\geq 1$ and define $\varphi: \mathcal B(2n)\to \mathcal B(2n+1)$  by $\varphi(\lambda)=\lambda\cup (1)$.  Clearly, $\varphi$ is a bijection and $m_\lambda(2)=m_{\varphi(\lambda)}(2)$ for all $\lambda\in \mathcal B(2n)$. If $\lambda\in \mathcal B(2n)$,  then $\pre_2(\lambda)$ has $m_\lambda(1)m_\lambda(2)$ parts equal to $2$ and, by construction,  
$\pre_2(\varphi(\lambda))=\pre_2(\lambda\cup (1))$ has $(m_\lambda(1)+1)m_\lambda(2)$ parts equal to $2$. Hence, 
 \begin{equation}\label{diff b22} b_{2,2}(2n+1)-b_{2,2}(2n)=\sum_{\lambda\in \mathcal B(2n)}m_\lambda(2).\end{equation} The right hand side of \eqref{diff b22} is the total number of parts equal to $2$ in all partitions in $\mathcal B(2n)$.

Next, note that if $\lambda\in B(2n)$ has no parts equal to $1$, then $\pre_2(\lambda)$ has no parts equal to $2$. Denote by $\mathcal B^*(2n)$ the set of partitions in $\mathcal B(2n)$ that have at least one part equal to $1$.  Then,  for $\lambda\in \mathcal B^*(2n)$, $m_\lambda(1)$ is even. 
Let  $\psi: \mathcal B^*(2n)\to \mathcal B(2n-1)$ be defined by $\psi(\lambda)=\lambda\setminus (1)$. Clearly, $\psi$ is a bijection and $m_\lambda(2)=m_{\psi(\lambda)}(2)$ for all $\lambda\in \mathcal B^*(2n)$. If $\lambda\in \mathcal B^*(2n)$,  then $\pre_2(\lambda)$ has $m_\lambda(1)m_\lambda(2)$ parts equal to $2$ and, by construction,  
$\pre_2(\psi(\lambda))=\pre_2(\lambda\setminus (1))$ has $(m_\lambda(1)-1)m_\lambda(2)>0$ parts equal to $2$.  Hence,  \begin{equation*}\label{diff b22*} b_{2,2}(2n)-b_{2,2}(2n-1)=\sum_{\lambda\in \mathcal B^*(2n)}m_\lambda(2)=\sum_{\lambda\in \mathcal B(2n-1)}m_\lambda(2),\end{equation*}
which with \eqref{diff b22} implies that for $n\geq 1$, 
  \begin{equation*} \Delta_{2,2}(n)=b_{2,2}(n+1)-b_{2,2}(n)=\sum_{\lambda\in \mathcal B(n)}m_\lambda(2).\end{equation*}

 The generating function for the number of parts equal to $2$ in all partitions in $\mathcal B(n)$ is obtained by 
taking the derivative   of $$\frac{1-q^2}{1-zq^{2}}\prod_{i=0}^\infty\frac{1}{1-q^{2^i}}$$  with respect to $z$ and evaluating at $z=1$.

Thus $$\sum_{n=0}^\infty \Delta_{2,2}(n)q^n=\frac{q^2}{1-q^{2}}\prod_{i=0}^\infty\frac{1}{1-q^{2^i}}.$$

Moreover, $$\sum_{n=0}^\infty (-1)^n\Delta_{2,2}(n)q^n=\frac{q^2}{1-q^{2}}\frac{1}{1+q}\prod_{i=1}^\infty\frac{1}{1-q^{2^i}}.$$ Since $$\sum_{n=0}^\infty \Delta_{2,2}(n)q^n= \sum_{n=0}^\infty \Delta_{2,2}(2n)q^{2n}+\sum_{n=0}^\infty \Delta_{2,2}(2n+1)q^{2n+1}$$ and $$\sum_{n=0}^\infty (-1)^n\Delta_{2,2}(n)q^n= \sum_{n=0}^\infty \Delta_{2,2}(2n)q^{2n}-\sum_{n=0}^\infty \Delta_{2,2}(2n+1)q^{2n+1},$$ we have \begin{align*}\sum_{n=0}^\infty \Delta_{2,2}(2n)q^{2n}& =\frac{1}{2} \frac{q^2}{1-q^{2}}\left(\frac{1}{1-q}+\frac{1}{1+q} \right)\prod_{i=1}^\infty\frac{1}{1-q^{2^i}}\\\ & =  \frac{q^2}{(1-q^{2})^2}\prod_{i=1}^\infty\frac{1}{1-q^{2^i}}. \end{align*} 
Replacing $q$ by $q^{1/2}$, we obtain 
\begin{align*}\sum_{n=0}^\infty \Delta_{2,2}(2n)q^{n}& = \frac{q}{(1-q)^2}\prod_{i=0}^\infty\frac{1}{1-q^{2^i}}=\frac{1}{q}\sum_{n=0}^\infty b_{1,2}(n)q^n=\sum_{n=0}^\infty b_{1,2}(n+1)q^n.\end{align*}  In the last equality we used $b_{1,2}(0)=0$.

Similarly, \begin{align*}\sum_{n=0}^\infty \Delta_{2,2}(2n+1)q^{2n+1}& =  \frac{q^3}{(1-q^{2})^2}\prod_{i=1}^\infty\frac{1}{1-q^{2^i}}. \end{align*} Dividing by $q$ and replacing $q$ by $q^{1/2}$, we obtain \begin{align*}\sum_{n=0}^\infty \Delta_{2,2}(2n+1)q^{n}& = \sum_{n=0}^\infty b_{1,2}(n+1)q^n.\end{align*}

\end{proof}

\section{Conjectures on the log-concavity of $e_j(n)$ and related functions }

 A sequence $\{a_n\}$ is log-concave if $$a_n^2\geq a_{n-1}a_{n+1} \text{ for all } n.$$
In \cite{DP}, DeSalvo and Pak proved the log-concavity of the partition function $p(n)$ for $n> 25$. Numerical evidence suggests that  the function $e_jp(n)$ is log-concave for $j>2$, and also for  $j= 1,2$ if $n$ is sufficiently large. 
\begin{conjecture}
\begin{align*}e_1p(n)^2  \geq e_1p(n-1)\cdot e_1p(n+1), & \text{ \ for } n>23; \\ e_2p(n)^2 \geq e_2p(n-1)\cdot e_2p(n+1),& \text{ \  for } n>17;\\ e_jp(n)^2 \geq  e_jp(n-1)\cdot e_jp(n+1),& \text{ \  for } j>2, n>0.
    \end{align*}
\end{conjecture}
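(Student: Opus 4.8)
\medskip

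\noindent\textbf{A proposed approach to the log-concavity conjecture.}
The plan is to reduce each of the three inequalities to the log-concavity of $p(n)$ proved by DeSalvo and Pak \cite{DP}, together with effective asymptotics, and then dispatch the remaining small cases by direct computation. The case $j=1$ is essentially free: since $e_1(\lambda)=|\lambda|$ for every partition, $e_1p(n)=n\,p(n)$, and the asserted inequality is equivalent to
$$\frac{p(n)^2}{p(n-1)\,p(n+1)}\ \geq\ 1-\frac1{n^2}.$$
By \cite{DP} the left-hand side exceeds $1$ for $n>25$, so only the range $n\leq 25$ need be checked by hand; one finds the ratio first clears the threshold $1-n^{-2}$ at $n=24$, giving exactly the stated bound $n>23$.

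For $j\geq 2$ the strategy rests on the observation that $e_jp(n)$ is a small multiplicative perturbation of $\frac{1}{j!}n^{j}p(n)$. First I would use Newton's identities to express $e_j(\lambda)$ as a fixed polynomial in the power sums $P_1(\lambda),\dots,P_j(\lambda)$ and isolate the leading monomial $P_1(\lambda)^{j}/j!=n^{j}/j!$; summing over $\lambda\vdash n$ and invoking the identity $\sum_{\lambda\vdash n}P_i(\lambda)=\sum_{k}\sigma_i(k)\,p(n-k)$ established in the proof of Theorem~\ref{Th1}, this yields an exact formula
$$e_jp(n)=\frac{1}{j!}\,n^{j}\,p(n)-R_j(n),$$
in which $R_j(n)$ is an explicit integer combination of convolutions $\sum_k\sigma_i(k)p(n-k)$ (for $j=2,3$ this is Corollary~\ref{C1}). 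Bounding, in each non-leading monomial, all but one power-sum factor trivially by a power of $n$ and using $\sum_k\sigma_i(k)p(n-k)=O\!\big(n^{(i+1)/2}p(n)\big)$ --- which follows from the Hardy--Ramanujan growth of $p$ together with $\sigma_i(k)\leq\zeta(i)k^i$ --- one gets $R_j(n)=O\!\big(n^{\,j-1/2}p(n)\big)$, so that $r_j(n):=j!\,R_j(n)/\big(n^{j}p(n)\big)=O(n^{-1/2})$.

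Next I would show that
$$\log e_jp(n)=j\log n-\log j!+\log p(n)+\log\big(1-r_j(n)\big)$$
is concave for large $n$. The first three terms give a concave function once $n>25$: $j\log n$ is concave and $\log p(n)$ is concave there, with discrete second difference $\asymp -n^{-3/2}$ coming from the term $(\log p)''(n)$ of the Hardy--Ramanujan--Rademacher expansion (the contribution of $j\log n$ being only $O(n^{-2})$). For the perturbation one needs the discrete second difference of $\log(1-r_j(n))$ to be of smaller order than $n^{-3/2}$; since $r_j(n)$ inherits a full asymptotic expansion in powers of $n^{-1/2}$ from the Rademacher expansion of $p(n)$ and a Meinardus-type analysis of the convolution sums $\sum_k\sigma_i(k)q^k$, its second difference is $O(n^{-5/2})$, which suffices. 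Equivalently, one may package this by noting that $\log e_jp(n)$ then admits an asymptotic expansion of the type required by the Griffin--Ono--Rolen--Zagier hyperbolicity criterion, which yields not only log-concavity but all higher Tur\'an inequalities for $e_jp(n)$ beyond an effective threshold $N_j$; the argument is completed by verifying the inequality directly for $n\leq N_j$ (noting that for $j\geq 3$ the values $e_jp(n)$ with $n<j$ vanish, so the content of the $n>0$ claim is genuinely for $n\geq j$).

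\noindent\textbf{Main obstacle.} The difficulty is entirely quantitative. Because the relative error $r_j(n)$ is only $O(n^{-1/2})$, every asymptotic used must come with explicit constants sharp enough to dominate the $n^{-3/2}$ concavity of the main term; and, exactly as in \cite{DP}, the resulting threshold $N_j$ is likely to be far larger than the bounds predicted by the conjecture ($n>17$ for $j=2$, and $n>0$ for $j\geq 3$), so a substantial but finite verification would be required. Establishing the clean predicted ranges --- in particular log-concavity from the first nonzero value when $j\geq 3$ --- appears to demand either a genuinely sharper effective analysis or a combinatorial injection of the kind that is still unknown even for $p(n)$ itself.
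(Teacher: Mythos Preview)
The paper does not prove this statement; it is presented there purely as a conjecture supported by numerical evidence, with no argument offered. There is therefore no proof to compare your proposal against, and what you have written is an attack on an open problem.

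Your treatment of $j=1$ is essentially complete and correct: since $e_1p(n)=np(n)$, the claim is equivalent to $p(n)^2/\bigl(p(n-1)p(n+1)\bigr)\geq 1-n^{-2}$, which the DeSalvo--Pak theorem gives for $n>25$ and a short direct check settles for $n=24,25$; this already goes beyond what the paper records. For $j\geq 2$ the outline is plausible but is not yet a proof, and two gaps deserve emphasis beyond the quantitative obstacle you already flag. First, the crude estimate $P_i(\lambda)\leq n^i$ yields only the order bound $r_j(n)=O(n^{-1/2})$; to control the second difference of $\log(1-r_j(n))$ at the required $o(n^{-3/2})$ level you need a genuine asymptotic \emph{expansion} of $r_j(n)$ with effective remainders, and invoking the Griffin--Ono--Rolen--Zagier framework presupposes exactly such an expansion rather than supplying one. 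Second, for $j\geq 4$ Newton's identities produce mixed sums such as $\sum_{\lambda\vdash n}P_2(\lambda)^2$ or $\sum_{\lambda\vdash n}P_2(\lambda)P_3(\lambda)$ that are \emph{not} of the convolution form $\sum_k\sigma_i(k)p(n-k)$; your appeal to a ``Meinardus-type analysis of the convolution sums'' does not cover these terms, and a separate argument (for instance via iterated differentiation of $\prod_k(1-q^k)^{-1}$, which introduces products of the Lambert series $\sum_k k^iq^k/(1-q^k)$ together with genuine correction terms) is needed before any expansion of $r_j(n)$ can be claimed. So while the $j=1$ case is settled and the $j=2,3$ cases reduce, via Corollary~\ref{C1}, to a hard but well-posed effective asymptotic problem, the $j\geq 4$ cases have a structural hole in the sketch in addition to the quantitative one.
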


Moreover, numerical examples suggest that the functions $F(n):=e_jp(n)/p(n)$ and $F(n)/n$ are also log-concave. 

\begin{conjecture}
   For $j,n>0$,
   \begin{enumerate}
      \item[(i)] $\displaystyle{  \left( \frac{e_jp(n)}{p(n)} \right)^2 \geq  \frac{e_jp(n-1)}{p(n-1)} \cdot \frac{e_jp(n+1)}{p(n+1)}  }$,\\
       \item[(ii)] $\displaystyle{ \left( \frac{e_jp(n)}{n\,p(n)} \right)^2 \geq  \frac{e_jp(n-1)}{(n-1)\,p(n-1)} \cdot \frac{e_jp(n+1)}{(n+1)\,p(n+1)}  }$.
     \end{enumerate}
\end{conjecture}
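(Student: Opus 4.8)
The plan is to reduce both conjectures to sharp asymptotics of $F$ and then to an essentially sign-free estimate. First I would expand $e_j$ in the power-sum basis via Newton's identities, $e_j=\sum_{\mu\vdash j}\frac{(-1)^{j-\ell(\mu)}}{z_\mu}\prod_i p_{\mu_i}$, and invoke the identity established inside the proof of Theorem~\ref{Th1}, $\sum_{\lambda\in\mathcal P(n)}P_m(\lambda)=\sum_{k=1}^n\sigma_m(k)\,p(n-k)$, together with its iterates for the mixed moments $\sum_{\lambda}\prod_iP_{\mu_i}(\lambda)$; this gives
$$e_jp(n)=\frac{n^j}{j!}\,p(n)+\sum_{\substack{\mu\vdash j\\ \mu\neq(1^j)}}\frac{(-1)^{j-\ell(\mu)}}{z_\mu}\sum_{\lambda\in\mathcal P(n)}\prod_iP_{\mu_i}(\lambda).$$
The first term is exact. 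For the remaining $\mu$ (which all have a part $\ge2$), a Tauberian estimate for $\sum_k\sigma_m(k)q^k\sim m!\,\zeta(m+1)(1-q)^{-m-1}$ as $q\to1^-$ — equivalently the circle method, or the Vershik limit shape for random partitions — gives $\sum_\lambda\prod_iP_{\mu_i}(\lambda)=p(n)\cdot O\!\bigl(n^{\,j-\frac12\sum_{\mu_i\ge2}(\mu_i-1)}\bigr)=o\bigl(n^jp(n)\bigr)$, and in fact a full asymptotic expansion. Hence
$$F(n)=\frac{n^j}{j!}\bigl(1+r_j(n)\bigr),$$
where $r_j$ has an asymptotic expansion in descending half-integer powers of $n$ with $r_j(n)=O(n^{-1/2})$; in particular $\Delta^2 r_j(n)=O(n^{-5/2})$, where $\Delta^2 g(n):=g(n+1)-2g(n)+g(n-1)$. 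The implied constants are effective, via Rademacher's convergent series for $p(n)$ and elementary bounds on $\sum_{k\le x}\sigma_m(k)$.

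\textbf{Log-concavity for $n$ large.} Substituting $F(m)=\frac{m^j}{j!}(1+r_j(m))$ into conjecture~(i) and clearing denominators, (i) becomes, for $n\ge2$,
$$\frac{(1+r_j(n))^2}{(1+r_j(n-1))(1+r_j(n+1))}\ \ge\ \Bigl(1-\tfrac1{n^2}\Bigr)^{\!j},$$
and the inequality of part~(ii) is the same with exponent $j-1$ in place of $j$. Both right-hand sides are $\le1$ and equal $1-\Theta\bigl(j/n^2\bigr)$, whereas the left-hand side equals $1-\Delta^2 r_j(n)+O(n^{-3})=1+O(n^{-5/2})$; since the slack $\Theta(j/n^2)$ on the right dominates the $O(n^{-5/2})$ fluctuation on the left, both inequalities hold for every $n\ge N_0(j)$ with $N_0(j)$ explicit. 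For $j=1$ nothing is needed: $e_1p(n)=n\,p(n)$, so $F(n)=n$, $r_1\equiv0$, and (i) reads $n^2\ge n^2-1$ while (ii) reads $1\ge1$.

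\textbf{The small range, and the main obstacle.} For $1\le n<N_0(j)$ one checks the inequalities directly; this is vacuous when $n<j$ (then $e_jp(n)=0$), so only the finite range $j\le n<N_0(j)$ is substantive, and there $e_jp(n)$ may be computed from $\prod_{i\ge1}(1-zq^i)^{-1}$ paired with the power-sum expansion of $e_j$, or from recurrences of the kind in Corollary~\ref{C1}. The crux — and the reason the statement is posed as a conjecture — is uniformity in $j$: one needs $N_0(j)$ to grow in an explicitly controlled (ideally polynomial) fashion, which forces the error terms above, in particular the mixed-moment estimates $\sum_\lambda\prod_iP_{\mu_i}(\lambda)$, to be made uniform in $j$, whereas Newton's identity carries $p(j)$ summands and the constants $m!\,\zeta(m+1)$ grow with $m$. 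I expect the honest resolution is either a $j$-uniform effective circle-method bound, or a separate combinatorial treatment of the ``few extra parts'' regime $j\le n\lesssim N_0(j)$, in which a partition of $n$ of length $\ge j$ consists of $(1^j)$ together with a small partition and $e_j$ admits crude bounds. As a byproduct, once (i) holds for $n>25$ the first conjecture of this section follows there, by combining it with the DeSalvo--Pak log-concavity of $p(n)$: $e_jp(n)^2=F(n)^2p(n)^2\ge F(n-1)F(n+1)\,p(n-1)p(n+1)=e_jp(n-1)\,e_jp(n+1)$.
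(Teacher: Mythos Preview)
The statement you are addressing is presented in the paper as a \emph{conjecture}, not a theorem: the authors offer no proof, only the remark that ``numerical examples suggest'' the log-concavity of $F(n)=e_jp(n)/p(n)$ and of $F(n)/n$. So there is nothing in the paper to compare your argument against.

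As for the proposal itself, it is a reasonable outline rather than a proof, and you are candid about this. The large-$n$ heuristic is sound in spirit: extracting the main term $n^j/j!$ via Newton's identities and showing that the correction $r_j(n)$ is of size $n^{-1/2}$ with second difference $O(n^{-5/2})$ is exactly the right shape, and the Tauberian estimate $\sum_k\sigma_m(k)q^k\sim m!\,\zeta(m+1)(1-q)^{-m-1}$ does give the leading behaviour of the convolution $\sum_k\sigma_m(k)p(n-k)/p(n)$. But several steps are only asserted: (a) the \emph{mixed} moments $\sum_{\lambda}\prod_iP_{\mu_i}(\lambda)$ are not literally iterates of the single-moment identity from Theorem~\ref{Th1} and need their own treatment (they come from higher $z$-derivatives of $\prod_i(1-zq^i)^{-1}$, which introduces extra factors beyond a simple product of Lambert series); (b) upgrading a leading-order asymptotic for $r_j$ to an expansion good enough to control $\Delta^2 r_j$ requires a genuine circle-method argument with effective error terms, not just a Tauberian limit; and (c) the decisive gap, which you yourself flag, is the range $j\le n<N_0(j)$: since the conjecture quantifies over all $j>0$, ``check directly'' is not a proof unless $N_0(j)$ is bounded uniformly in $j$ or the small range is handled by a separate argument. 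Your suggestion of analysing the ``few extra parts'' regime is a natural avenue, but as written the proposal does not close the conjecture.
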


\end{document}